\numberwithin{equation}{section}
\newtheorem{thm}[equation]{Theorem}
\newtheorem{defn}[equation]{Definition}
\newtheorem{prop}[equation]{Proposition}
\newtheorem{cor}[equation]{Corollary}
\newtheorem{lemma}[equation]{Lemma}
\theoremstyle{definition}  
\newtheorem{note}[equation]{Note}
\newtheorem{remark}[equation]{Remark}
\newcommand{\dfn}{\textbf} 
\newcommand{\cat}{\EuScript}    
\newcommand{\cC}{{\cat C}}
\newcommand{\cM}{{\cat M}}
\newcommand{\cN}{{\cat N}}
\newcommand{\cV}{{\cat V}}
\newcommand{\cU}{{\cat U}}
\newcommand{\Gr}{{\cat Grpd}}
\newcommand{\Set}{{\cat Set}}
\newcommand{\field}[1]  {\mathbb #1} 
\newcommand{\Z}         {\field Z}
\DeclareMathOperator*{\colim}{colim}
\DeclareMathOperator*{\holim}{holim}
\DeclareMathOperator{\Hom}{Hom}
\DeclareMathOperator{\diag}{diag}
\DeclareMathOperator{\Iso}{Iso}
\DeclareMathOperator{\Aff}{Aff}
\newcommand{\ra}{\rightarrow}                   
\newcommand{\lra}{\longrightarrow}              
\newcommand{\lla}{\longleftarrow}               
\newcommand{\llra}[1]{\stackrel{#1}{\lra}}      
\newcommand{\llla}[1]{\stackrel{#1}{\lla}}      
\newcommand{\we}{\llra{\sim}}                   
\newcommand{\fib}{\twoheadrightarrow}           
\newcommand{\dbra}{\rightrightarrows}           
\newcommand{\m}{\cM}         
\newcommand{\n}{\cN}         
\newcommand{\tuborg}{\left\{\begin{array}{ll}}
\newcommand{\sluttuborg}{\end{array}\right.}
\begin{document}

\email{sjh@math.huji.ac.il+}

\title{Characterizing algebraic stacks}

\author{Sharon Hollander}

\address{Department of Mathematics,
Hebrew University, Jerusalem, Israel}

\date{April, 25, 2006}
\subjclass{Primary 55U10 ; Secondary 18G55, 14A20 }

\begin{abstract}
We extend the notion of algebraic stack to
an arbitrary subcanonical site $\cC$. If the topology on $\cC$ is local 
on the target and satisfies descent for morphisms, we
show that algebraic stacks are precisely those which are 
weakly equivalent to representable presheaves of groupoids
whose domain map is a cover. This leads naturally to a definition of
algebraic $n$-stacks. We also compare different
sites naturally associated to a stack.
\end{abstract}

\maketitle
\bibliographystyle{amsplain}

\section{Introduction}

Stacks arise naturally in the study of moduli problems in
geometry. They were introduced by Giraud \cite{Gi} and Grothendieck, and were 
used by Deligne and Mumford \cite{DM} to study the the moduli spaces of curves.
They have recently become important also in differential 
geometry \cite{Bry} and homotopy theory \cite{G}. 
Higher order generalizations of stacks are also receiving 
much attention from algebraic geometers and homotopy theorists.

In this paper, we continue the study of stacks
from the point of view of homotopy theory started in \cite{H,H2}.
The aim of these papers is to show that many properties of stacks and
classical constructions with stacks are homotopy theoretic in nature.
This homotopy theoretical understanding gives rise to a simpler
and more powerful general theory.
In \cite{H} we introduced model category structures
on different ambient categories in which stacks are the fibrant objects,
and showed that they are all Quillen equivalent. 
In this paper we work with the simplest such model: 
the local model structure on presheaves of groupoids on site $\cC$,
which we denote by $P(\cC,\Gr)_L$. 

Deligne and Mumford introduced the notion of an algebraic stack in 
\cite[Definition $4.6$]{DM}.
This definition generalizes easily to an arbitrary site $\cC$ and our main
result is a characterization of these (generalized) algebraic stacks on sites
satisfying certain mild hypotheses.

A key observation is that the (2-category) fiber product (see Definition \ref{fiber-product}) is 
a model for the homotopy pullback in the model category
 $P(\cC,\Gr)_L$ and this allows us to rewrite the definition of \emph{representable morphism}
in the following homotopy invariant fashion: 
\[ f: \m \to \n  \in P(\cC,\Gr) \]
is representable if for each $X \to \n$ with $X \in \cC$, the homotopy pullback
$\m \times_\n^h X$ is weakly equivalent to a representable.

Generalizing \cite[Definition $4.4$]{DM}, we say that a presheaf of groupoids $\m$ on $\cC$ is
\emph{algebraic} if the diagonal $ \m \to \m \times \m$ is a representable morphism and there
exists a \emph{cover} $X \to \m$ with $X \in \cC$. By cover we mean 
a representable morphism such that for all $Y \in \cC$, the homotopy pullback
\[ X \times^h_\m Y \to Y \]
is weakly equivalent to a cover in $\cC$. 

We say that the (basis for the) topology on $\cC$ is \emph{local} if the notion
of cover is local on the target (Definition \ref{localontarget}).
This condition is satisfied by virtually
all the topologies in use in algebraic geometry and one can always
saturate a basis for a topology so that this condition is satisfied.
A topology on $\cC$ satisfies \emph{descent for morphisms} if the 
contravariant
assignment $X \mapsto \Iso(\cC/X)$ is a stack (Definition \ref{descformorph}).
Our main result is then the following.

\begin{thm}[Theorem \ref{char-alg-stacks}]
Let $\cC$ be a Grothendieck topology which is local on the target 
and satisfies descent for morphisms.
$\m \in P(\cC,\Gr)$ is algebraic if and only if $\m$ is weakly equivalent
in $P(\cC,\Gr)_L$ to a representable presheaf of groupoids $(X_o,X_m)$
with the domain map $X_m \to X_o$ a cover in $\cC$.
\end{thm}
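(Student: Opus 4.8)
The plan is to prove the two implications separately. Throughout we use that the 2-fiber product of Definition~\ref{fiber-product} is a model for the homotopy pullback in $P(\cC,\Gr)_L$, and that, as noted in the introduction, ``representable morphism'' and ``cover'' are homotopy-invariant notions; in particular the property of being algebraic depends only on the class of $\m$ in $\Ho(P(\cC,\Gr)_L)$, so in each direction we may work with whichever model is convenient.

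First suppose $\m$ is weakly equivalent to a representable groupoid $(X_o,X_m)$ whose domain map $X_m\to X_o$ is a cover; we may take $\m=(X_o,X_m)$. The candidate cover is the tautological map $q\colon X_o\to(X_o,X_m)=\m$. Since a morphism of presheaves of groupoids from a representable $\und Y$ into $(X_o,X_m)$ is the same as a morphism $Y\to X_o$, the 2-fiber product identifies $X_o\times^h_\m\und Y$ with the pullback of the target map $X_m\to X_o$ along $Y\to X_o$. This pullback exists in $\cC$ and maps to $Y$ by a cover, because covers are stable under base change and the target map --- being the composite of the domain map with the invertible map $X_m\to X_m$ --- is again a cover; the hypothesis that the topology is local on the target is what lets one promote such computations to the statement that $X_o\times^h_\m\und Y\to Y$ is a cover in full generality. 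For the diagonal, a morphism $\und T\to\m\times\m$ is a pair $a,b\colon T\to X_o$, and the same computation identifies $\m\times^h_{\m\times\m}\und T$ with the pullback of $(s,t)\colon X_m\to X_o\times X_o$ along $(a,b)\colon\und T\to X_o\times X_o$ --- the ``$\Iso$-presheaf'' of the pair. This is the delicate point: over a general site it is not an honest fibre product, so one covers $T$ by pieces on which it is manifestly representable and glues, and the hypothesis of descent for morphisms --- the assertion that $X\mapsto\Iso(\cC/X)$ is a stack --- is precisely what makes the glued presheaf representable. Hence the diagonal is representable and $\m$ is algebraic.

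Conversely, suppose $\m$ is algebraic, with a cover $p\colon X\to\m$, $X\in\cC$. Put $X_o:=X$ and let $X_m$ be a representable model for the homotopy pullback $X\times^h_\m X$; this is representable precisely because $p$ is a representable morphism (apply the definition with $Y=X$). The 2-fiber product structure makes $(X_o,X_m)$ a presheaf of groupoids --- source and target are the two projections, and identity, composition and inverse are inherited from $\m$ --- and there is a canonical comparison $\varphi\colon(X_o,X_m)\to\m$ which is $p$ on objects. We check $\varphi$ is a weak equivalence in $P(\cC,\Gr)_L$. It induces an isomorphism on presheaves of morphisms: by construction the morphisms of $(X_o,X_m)$ over $T$ from $a$ to $b$ are exactly the isomorphisms $p(a)\to p(b)$ in $\m(T)$. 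And it is locally essentially surjective: an object $\xi\in\m(T)$ pulls $p$ back to $X\times^h_\m\und T\to T$, which by hypothesis is weakly equivalent to a cover $U\to T$ in $\cC$, and the homotopy-pullback square exhibits $\xi|_U$ as isomorphic to an object pulled back from $X$. By the characterization of local weak equivalences of presheaves of groupoids in \cite{H}, $\varphi$ is a local weak equivalence. Finally, under $X_m\simeq X\times^h_\m X$ the domain map $X_m\to X_o$ is one of the two projections; applying the cover hypothesis for $p$ with $Y=X$, and the symmetry of $X\times^h_\m X$ coming from the inverse in $\m$, this projection is weakly equivalent to a cover in $\cC$, hence --- being a morphism of representables over a subcanonical site --- is a cover in $\cC$.

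The step I expect to be the main obstacle is the representability of the diagonal in the first implication: over an abstract subcanonical site the $\Iso$-presheaves are not fibre products, and recognizing them as representable forces one to use descent for morphisms in an essential way, with locality on the target needed to keep the class of covers under control. By comparison the converse implication is relatively formal, once one knows that the 2-fiber product computes the homotopy pullback and has the expected characterization of local weak equivalences at hand.
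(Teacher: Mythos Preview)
Your converse direction (algebraic $\Rightarrow$ groupoid) is essentially the paper's argument: take a cover $X\to\m$, set $X_m$ to be a representable model for $X\times^h_\m X$, and verify that $(X,X_m)\to\m$ satisfies the local lifting conditions. Fine.

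The forward direction (groupoid $\Rightarrow$ algebraic) is where you and the paper diverge, and your account contains a genuine confusion about where the site hypotheses enter.

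You choose to work with the \emph{prestack} $\m=(X_o,X_m)$ and then invoke homotopy invariance of ``algebraic''. For the prestack, a map $\und T\to\m\times\m$ is literally a pair $a,b\colon T\to X_o$, and the $\Iso$-presheaf you write down \emph{is} an honest fibre product: it is $T\times_{X_o\times X_o}X_m$, and the paper's standing assumption is that $\cC$ is closed under pullbacks. So your sentence ``over a general site it is not an honest fibre product, so one covers $T$ \ldots and glues'' is simply false at this point. For the prestack, both the representability of the diagonal and the fact that $X_o\to(X_o,X_m)$ is a cover are elementary computations requiring neither descent for morphisms nor locality on the target.

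The paper instead proves the forward direction for the \emph{stackification} $\m_{(X_o,X_m)}$ directly (Proposition~\ref{descentformorph}). There a map $Y\to\m_{(X_o,X_m)}$ need not factor through $X_o$; one only knows that after passing to a cover $U\to Y$ it lifts to $X_o$. Over each $U\times_Y\cdots\times_Y U$ the homotopy fibre product with $X_o$ is visibly representable, and descent for morphisms is exactly what glues these representables to produce a representable over $Y$; locality on the target is what then shows the resulting map to $Y$ is a cover. This is where the two hypotheses are genuinely used.

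Your strategy would be salvageable if the homotopy invariance of ``algebraic'' were available without the site hypotheses, but that is precisely the point at issue: passing from ``$(X_o,X_m)$ has representable diagonal'' to ``its stackification has representable diagonal'' requires handling maps $Y\to\hat\m$ with no lift to $X_o$, and the argument for that is the descent argument you tried to insert elsewhere. In short, you have located the right tools but deployed them at the wrong step; the paper's proof puts descent where it actually does work.
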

In particular, if $\cC=\Aff_{flat}$ 
(fpqc for affine schemes), this theorem characterizes
algebraic stacks as those weakly equivalent to flat Hopf algebroids.

This result leads naturally to a definition of {\it algebraic $\infty$-stacks
($n$-stacks)}; they are those presheaves of simplicial sets on $\cC$ which are
weakly equivalent in $P(\cC,s\Set)_L$ (see \cite{DHI}) to 
(the $n$-coskeleton of) 
a simplicial object in $\cC$ where all the boundary maps are covers.

In the appendix we consider several natural sites associated to a stack and compare them.
The first is the classical site $\cC/\m$ (see \cite{DM}). 
In this topology, the objects are maps $X \to \m$ with $X \in \cC$.
It is natural to ask for a topology on the over category of $\m$ (in which $\m$
itself is an object). We use the notion of representability to construct a larger 
site $Rep/\m$ and prove that for $\m$ algebraic the sheaves on the 
two sites agree.

We also construct a topology on $P(\cC,\Gr)/\m$ where the covers
are collections of fibrations $\cU_i \to \m$ such that the canonical map 
from the realization of the nerve $|\cU_\bullet| \to \m$
is a weak equivalence. We characterize these covers as those sets of maps whose image is locally a covering
sieve for the topology on $\cC$ (see Proposition \ref{charac-covers}).

\subsection{Relation to other work}  In his 2004 Northwestern thesis 
E. Pribble \cite{P} constructs an equivalence of 2-categories between 
flat Hopf algebroids and rigidified algebraic stacks.
This is essentially equivalent to Theorem \ref{char-alg-stacks} in case 
$\cC$ is affine schemes in the flat topology.

\subsection{Acknowledgments} 
I would like to thank G. Granja for helpful comments.

\subsection{Notation and conventions}

We assume that our fixed base site $\cC$ is small and closed under finite products
and pullbacks. By topology we mean what is usually called a basis for a topology 
\cite[Definition III.2.2]{MM}. We assume the topology is subcanonical, i.e. that 
the representable functors are sheaves, and identify the objects in $\cC$ with 
the sheaves they represent.

We write $P(\cC,\Gr)$ for the category of presheaves of groupoids on $\cC$. 
If $\{U_i \to X\}$ is a cover, we write $U=\coprod_i U_i$ for the coproduct of the
sheaves and $U_\bullet$ for the \emph{nerve of the cover} which 
is the simplicial object obtained by taking iterated fiber products over $X$.
We will sometimes abuse notation and write a cover as $U \to X$.
$|U_\bullet|$ will denote the geometric realization of the simplicial object
in $P(\cC, \Gr)$. Recall that the geometric realization of a simplicial diagram 
$F_\bullet$ in $P(\cC,\Gr)$ is defined by $|F_\bullet|(X) = |F_\bullet(X)|$ 
(see \cite[Section 2.2]{H}).

We will write $P(\cC,\Gr)$ for the category of presheaves of groupoids with 
the \emph{levelwise model structure} where a map $F \to F'$ is a fibration 
(weak equivalence) if and only if $F(X) \to F'(X)$ is a fibration 
(weak equivalence) in $\Gr$ for all $X \in \cC$. 
We will write $P(\cC,\Gr)_L$ for the \emph{local model structure} which is the localization
of $P(\cC,\Gr)$ with respect to the maps $|U_\bullet| \to X$ where $U\to X$ is a cover (see \cite{H}).  The local model structure $P(\cC,\Gr)_L$ is our default.

We will use repeatedly the basic result \cite[Theorem 5.7]{H} which characterizes the
weak equivalences in $P(\cC,\Gr)_L$ as those satisfying the
\emph{local lifting conditions}. 
\begin{defn} \cite[Definition 5.6]{H}
\label{locallifting}
A map $F \to G \in P(\cC,\Gr)$ satisfies the \emph{local lifting conditions} if
\begin{enumerate}
\item Given a commutative square 
\[\parbox[c]{\fill}{
\xymatrix{ \emptyset \ar[r] \ar[d] & F(X) \ar[d] \\ \star \ar[r] & G(X)}}
\hspace{.4cm}
\parbox[c]{\fill}{ \mbox{ $\Rightarrow$ $\exists$ cover $U \ra X$,} }
\hspace{3.3cm}
\parbox[c]{\fill}{
\xymatrix{\star \ar@/^3ex/@{-->}[rrr] \ar[d] & \emptyset \ar[l] \ar[r]
\ar[d] &  F(X) \ar[d] \ar[r] & F(U) \ar[d] \\
\Delta^1 \ar@/_3ex/@{-->}[rrr] & \star \ar[l] \ar[r] & G(X)\ar[r] &
G(U).} }
\]
\\
\item
For $A \ra B$, one of the generating cofibrations 
$\partial \Delta^1 \ra \Delta^1, \text{  } B\Z \ra \star,$ 
given a commutative square
\[
\parbox[c]{\fill}{
\xymatrix{ A \ar[r] \ar[d] & F(X) \ar[d] \\ B \ar[r] & G(X)}}
\hspace{.4cm}
\parbox[c]{\fill}{ \mbox{ $\Rightarrow$ $\exists$ cover $U \ra X$,} }
\hspace{3.3cm}
\parbox[c]{\fill}{
\xymatrix{A  \ar[r] \ar[d] &  F(X) \ar[d] \ar[r] & F(U) \ar[d] \\
B \ar[r] \ar@{-->}[rru] & G(X) \ar[r] & G(U).} }
\]
\end{enumerate}
\end{defn}

\section{Fiber Product}

In this section we will review the {\it fiber product of stacks} \cite[Definition 2.2.2]{LM-B}
from our homotopy theoretic point of view.

\begin{defn} 
\label{fiber-product}
Let 
\[ \cM_1 \xrightarrow{i} \cN \xleftarrow{j} \cM_2 \]
be a diagram in $P(\cC,\Gr)$.  The \emph{homotopy fiber product} 
$\cM_1 \times^h_{\cN} \cM_2$, is the presheaf of groupoids defined as follows:
\begin{enumerate}
\item the objects of $(\cM_1 \times^h_{\cN}\cM_2)(X)$ are triples 
		$(a,b,\phi)$ with $a \in \cM_1(X), b\in \cM_2(X)$ and 
		an isomorphism $\phi:i(a) \we j(b)$, and
\item morphisms of $(\cM_1 \times^h_{\cN}\cM_2)(X)$ from $(a,b,\phi)$ to 
		$(a',b', \phi')$ are pairs $(\alpha, \beta)$ where 
		$\alpha:a \cong a'$ and $\beta: b \cong b'$, such that 
		$\phi' \circ i(\alpha) = j(\beta) \circ \phi$.
\end{enumerate} 
\end{defn}
There are natural projections $p_i:\cM_1 \times^h_{\cN} \cM_2 \ra \cM_i$
and natural homotopy $i \circ p_1 \ra j \circ p_2$ which are 
universal in the following sense.
To give a map $f:\m \ra  \cM_1 \times^h_{\cN}\cM_2$ is the same 
as to give a maps $f_i:\m \ra \m_i$ and a levelwise homotopy 
$i\circ f_1 \ra j\circ f_2$. 

The homotopy fiber product defined above is obviously the homotopy limit of 
the pullback diagram in the category $P(\cC,\Gr)$ with the levelwise model structure.
In fact, it also provides a model for the homotopy pullback in the local 
model structure as we now see.
\begin{lemma}  
\label{levelwisefp}
The homotopy fiber product of Definition \ref{fiber-product} 
is a model for the homotopy pullback in $P(\cC,\Gr)_L$. 
\end{lemma}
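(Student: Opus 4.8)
The plan is to show that the homotopy fiber product of Definition \ref{fiber-product}, which is already known to compute the homotopy pullback in the levelwise model structure $P(\cC,\Gr)$, also computes it in the localization $P(\cC,\Gr)_L$. The key input is that the local model structure is a left Bousfield localization of the levelwise one, so they share the same cofibrations; only the fibrations and weak equivalences change. Since a homotopy pullback in a left proper model category can be computed by taking a fibrant replacement of one leg (in a factorization into a cofibration followed by a fibration) and then forming the ordinary pullback, it suffices to relate the fibrant replacements in the two structures. The main point is that the construction in Definition \ref{fiber-product} is in fact invariant enough that changing one of the maps up to local weak equivalence does not change the result up to local weak equivalence.

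First I would recall that $P(\cC,\Gr)_L$ is left proper (this is part of the setup from \cite{H}, since it is a localization of a left proper model structure on presheaves of groupoids), so homotopy pullbacks are computed as above, and moreover the class of local weak equivalences is closed under the relevant manipulations. Next I would observe that the levelwise fibrations are, in particular, local fibrations, so the explicit model $\cM_1 \times^h_{\cN}\cM_2$ — which may be rewritten as the strict pullback $\cM_1 \times_{\cN} P\cN \times_{\cN} \cM_2$ along the levelwise fibration $P\cN \to \cN \times \cN$ coming from the path object in $\Gr$ — is a strict pullback along a local fibration, hence a homotopy pullback in $P(\cC,\Gr)_L$ *provided* one of the remaining maps is also suitably fibrant or the structure is right proper along that map. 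Groupoid-valued presheaves behave well here because every object of $\Gr$ is fibrant, so the levelwise structure is right proper; combined with left properness of $P(\cC,\Gr)_L$ and the fact that its weak equivalences are detected by the local lifting conditions of Definition \ref{locallifting}, this lets us conclude.

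Concretely, the key step is: given the diagram $\cM_1 \xrightarrow{i} \cN \xleftarrow{j} \cM_2$, factor $i$ through the path-object construction as $\cM_1 \to \tilde{\cM}_1 \xrightarrow{q} \cN$ with $q$ a levelwise fibration and $\cM_1 \to \tilde{\cM}_1$ a levelwise weak equivalence; then $\cM_1\times^h_{\cN}\cM_2 \cong \tilde{\cM}_1 \times_{\cN} \cM_2$, the strict pullback. Since a levelwise weak equivalence is a local weak equivalence, and since the strict pullback of $q$ (a local fibration) against $j$ computes the homotopy pullback in $P(\cC,\Gr)_L$ by left properness together with the fact that local trivial cofibrations are levelwise — no, rather by the standard criterion that pulling back along a local fibration between local objects preserves local weak equivalences — one gets the claim. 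I would therefore reduce, via this standard criterion, to checking that $q: \tilde{\cM}_1 \to \cN$ can be taken to be a local fibration and that pulling back along it is homotopically well-behaved, which is exactly where right properness of the levelwise structure and the explicit groupoid-level path object enter.

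The hard part will be the properness/invariance step: verifying that the specific model $\cM_1\times^h_{\cN}\cM_2$, built from a levelwise (not local) path object on $\cN$, is genuinely insensitive to replacing the span by a locally weakly equivalent one, i.e.\ that pullback along the map $q$ above preserves local weak equivalences. For presheaves of groupoids this should follow from a direct check using the local lifting conditions of Definition \ref{locallifting} — tracing a local lift of an object or morphism in the base through the explicit description of objects $(a,b,\phi)$ and morphisms $(\alpha,\beta)$ of the fiber product — rather than from abstract right properness of $P(\cC,\Gr)_L$ (which one might not want to assume). I expect the argument to hinge on the observation that $\Gr$, and hence the levelwise structure, is right proper, so that the levelwise homotopy pullback is automatically the levelwise homotopy pullback of any levelwise-equivalent span; upgrading "levelwise" to "local" then uses only that local weak equivalences contain levelwise ones and are closed under the filtered-type colimit/cover refinements encoded in Definition \ref{locallifting}.
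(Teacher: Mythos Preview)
Your overall shape --- compare the levelwise homotopy pullback to the local one via a factorization --- matches the paper's, but the proposal contains two concrete errors and sidesteps the one fact that makes the argument short.

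First, you invoke \emph{left} properness of $P(\cC,\Gr)_L$ to justify computing homotopy \emph{pullbacks} by replacing one leg with a fibration. That is the wrong properness: left properness controls homotopy pushouts. For pullbacks you need right properness. Second, you write that ``levelwise fibrations are, in particular, local fibrations.'' In a left Bousfield localization the inclusion goes the other way: local fibrations form a subclass of levelwise fibrations (there are more trivial cofibrations to lift against). So your path-object replacement $q\colon\tilde\cM_1\to\cN$ is only a levelwise fibration, and you cannot conclude directly that the strict pullback along $q$ is a local homotopy pullback.

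The paper resolves exactly this gap by citing \cite[Corollary~5.8]{H}, which says that $P(\cC,\Gr)_L$ is right proper, and more specifically that the pullback of a local weak equivalence along a \emph{levelwise} fibration is again a local weak equivalence. With that in hand the proof is three lines: factor $\cM_2\to\cN$ as a local trivial cofibration followed by a local fibration $\cM_2\we\cM'\fib\cN$, then refactor $\cM_2\we\cM'$ levelwise as $\cM_2\we\cM''\fib\cM'$; one has $\cM_1\times^h_\cN\cM_2\simeq\cM_1\times_\cN\cM''$ levelwise, and $\cM_1\times_\cN\cM''\to\cM_1\times_\cN\cM'$ is a local weak equivalence by the cited corollary. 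Your proposed direct verification via the local lifting conditions would amount to reproving that corollary; it is not wrong as a plan, but you should recognize that this is the crux, and that the relevant input is right (not left) properness of the local structure.
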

\begin{proof}
Consider the pullback diagram in Definition \ref{fiber-product}.
Since $P(\cC,\Gr)_L$ is right proper \cite[Corollary 5.8]{H} (and $P(\cC,\Gr)$ is obviously right proper), 
the homotopy fiber product in both of these model categories is obtained by replacing the map 
$\m_2 \to \n$ by a fibration and taking the pullback.

Factor $\cM_2 \ra \cN$ into a trivial cofibration 
followed by a fibration $\cM_2 \we \cM' \fib \cN$ in $P(\cC,\Gr)_L$.
Further factor $\cM_2 \we \cM'$ into a levelwise trivial cofibration and a levelwise
fibration $\cM_2 \we \cM'' \fib \cM'$. Then we have a levelwise weak equivalence 
$$\m_1 \times^{h}_{\cN} \cM_2 \simeq \m_1 \times_\cN \cM''.$$
The map $\m_1 \times_\cN \cM'' \ra \m_1 \times_\cN \cM'$ is the pullback of a levelwise
fibration and weak equivalence and hence, by \cite[Corollary 5.8]{H}, it is 
itself a weak equivalence.
\end{proof}

\begin{remark}
Since homotopy limits commute with each other,
if $\m_1,\m_2$ and $\n$ in Definition \ref{fiber-product} are stacks,
(presheaves of groupoids satisfying the homotopy sheaf condition, 
see \cite[Definition 1.3]{H})
the homotopy fiber product $\m_1 \times^h_\n \m_2$ is also a stack
and agrees with what is usually called the fiber product of stacks \cite[2.2.2]{LM-B}.
\end{remark}

Given a groupoid object $(X_{0},X_{1})$ in $\cC$ we abuse notation
and let $(X_{0},X_{1})$ denote the presheaf of groupoids of which $X_0$ represents
the objects and $X_m$ represents the morphisms.  We let $\m_{(X_0,X_1)}$
denote the fibrant replacement in $P(\cC, \Gr)_L$ of $(X_0,X_1)$, that is its 
{\it stackification}. 

\begin{lemma}\label{ass-stack}
Let $\m$ be a presheaf of groupoids,
$(X_{0},X_{1})$ be a groupoid object in $\cC$, and 
$(X_{0},X_{1}) \ra \m$ a weak equivalence in $P(\cC,\Gr)_L$.
The map \[ \xymatrix{ X_{1}\ar[r] & X_{0} \times^h_{\m} X_{0} }\]
induced by the domain and range is a weak equivalence. If $\m$ is a stack,
it is a levelwise weak equivalence.
\end{lemma}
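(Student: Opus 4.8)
The idea is to reduce the statement to a levelwise computation by replacing everything with something fibrant. First I would use Lemma \ref{levelwisefp} to identify $X_0 \times^h_\m X_0$ with the honest homotopy fiber product of Definition \ref{fiber-product}, so that it suffices to produce a weak equivalence in $P(\cC,\Gr)_L$ from $X_1$ to this concrete presheaf of groupoids. The natural map is the one sending a point $g \in X_1(Y)$ (a morphism of the groupoid $(X_0,X_1)(Y)$) to the triple $(\dom(g), \range(g), \phi_g)$, where $\phi_g$ is the isomorphism in $\m(Y)$ obtained by transporting $g$ along the given map $(X_0,X_1) \to \m$; on morphisms it is similarly determined by the groupoid structure. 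One then checks this is a well-defined map of presheaves of groupoids.

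To verify it is a local weak equivalence, I would apply the local lifting criterion of Definition \ref{locallifting} (i.e. \cite[Theorem 5.7]{H}). This is where the hypothesis that $(X_0,X_1) \to \m$ is itself a local weak equivalence gets used: for the essential-surjectivity-type lifting condition (condition (1)), given a triple $(a,b,\phi) \in (X_0 \times^h_\m X_0)(X)$, one must find a cover $U \to X$ over which $(a,b,\phi)$ lifts to (is isomorphic to the image of) an element of $X_1(U)$. Since $a,b$ already come from $X_0(X)$, the only content is to lift the isomorphism $\phi : a \to b$ in $\m(X)$ to a morphism in $X_1$; locally this is possible precisely because $(X_0,X_1) \to \m$ is a local weak equivalence (both its "essential surjectivity" and its "full faithfulness" local lifting conditions feed in here — full faithfulness gives the lift of $\phi$ up to refining the cover, and a further application handles compatibility). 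For the fully-faithful-type condition (condition (2) with $A \to B$ the cofibration $\partial\Delta^1 \to \Delta^1$ or $B\Z \to \star$), one must lift paths and nullhomotopies in $X_0 \times^h_\m X_0$, which again unwinds to the same local lifting data for $(X_0,X_1) \to \m$ together with the fact that $X_1$ represents the morphisms of the groupoid object. I expect this bookkeeping — keeping track of which cover refines which, and assembling the separate lifts into a single coherent one — to be the main technical obstacle, though each individual step is routine.

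For the last sentence, suppose $\m$ is a stack. Then $\m$ is levelwise fibrant, and $\m_1 \times^h_\n \m_2$ with $\m_i = \m_2 = X_0$, $\n = \m$ is, by the Remark following Definition \ref{fiber-product} (homotopy limits commute), again a stack once we know $X_0$ is a stack — but $X_0 \in \cC$ is representable, hence a sheaf, hence a stack by the subcanonicity assumption. So $X_0 \times^h_\m X_0$ is a stack. Now $X_1$ need not be a stack, but since $(X_0,X_1) \to \m$ is a local weak equivalence and $\m$ is a stack, the stackification map $(X_0,X_1) \to \m_{(X_0,X_1)}$ factors the given map up to the weak equivalence $\m_{(X_0,X_1)} \simeq \m$; in particular, on the subobject $X_1$ of morphisms, one has that $X_1 \to X_0 \times^h_\m X_0$ is a local weak equivalence between presheaves whose targets are already fit for a levelwise comparison. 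More directly: both $X_1$ and $X_0 \times^h_\m X_0$ receive the relevant structure, and a local weak equivalence into a stack from a presheaf that is "relatively representable" over representables is automatically a levelwise weak equivalence — concretely, $X_0 \times^h_\m X_0$ is the sheaf of isomorphisms and a local weak equivalence of sheaves of sets (which these are, being $0$-truncated here since $X_0, X_1 \in \cC$) is an isomorphism. Thus the map $X_1 \to X_0 \times^h_\m X_0$ is a levelwise weak equivalence (indeed an isomorphism of sheaves) when $\m$ is a stack.
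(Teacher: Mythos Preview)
Your argument for the main claim is essentially the paper's: verify the local lifting conditions for $X_1 \to X_0\times^h_\m X_0$ by unwinding them into the local lifting conditions already assumed for $(X_0,X_1)\to\m$. The paper organizes this by first treating $\m=\m_{(X_0,X_1)}$ and then invoking homotopy invariance of the homotopy pullback (Lemma~\ref{levelwisefp}) for general $\m$, but the content is the same. One small simplification you missed: since $X_0(X)$ is discrete, any morphism in $(X_0\times^h_\m X_0)(X)$ has trivial $\alpha,\beta$, so condition~(2) reduces immediately to the $B\Z\to *$ lifting condition for $(X_0,X_1)\to\m$; there is no real bookkeeping of covers to do.

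Your treatment of the stack case is confused. You write ``$X_1$ need not be a stack'', but $X_1\in\cC$ is representable and the topology is subcanonical, so $X_1$ \emph{is} a sheaf and hence a (discrete) stack. Once you observe this, the paper's argument is a single sentence: $X_1$ and $X_0\times^h_\m X_0$ are both fibrant in $P(\cC,\Gr)_L$ (the latter by the remark that homotopy fiber products of stacks are stacks), and any weak equivalence between fibrant objects in a left Bousfield localization is already a levelwise weak equivalence. Your eventual argument via ``local weak equivalence of sheaves of sets is an isomorphism'' is also correct and in fact gives a slightly stronger conclusion, but the detour through stackification of $(X_0,X_1)$ is unnecessary.
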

\begin{proof}
First we prove that the map
is a weak equivalence for $\m= \m_{(X_0,X_1)}$. By Lemma \ref{levelwisefp}, we need to verify the 
local lifting conditions of Definition \ref{locallifting} for the map $X_1 \to X_0\times^{h}_\m X_0$.
By definition the map $(X_{0},X_{1}) \ra \m$ is a weak equivalence
and so by \ref{locallifting}(2) given two objects $a,b \in X_0(Y)$ and an
isomorphism between their images in $\m(Y)$ there exists a cover 
$U \ra Y \in \cC$
such that this isomorphism lifts to $X_1(U)$.  This implies that  
condition \ref{locallifting}(1) holds for the map $X_1 \ra X_{0} \times^{h}_{\m} X_{0}$.

Similarly, given $\phi_1,\phi_2 \in X_1(X)$, an isomorphism between their images
in $(X_{0} \times^{h}_{\m} X_{0})(X)$ is necessarily trivial (as $X_0(X)$ is discrete)
and so the images of $\phi_1$ and $\phi_2$ in $\m(X)$ are the same.

The fact that $(X_{0},X_{1}) \ra \m$ satisfies
condition \ref{locallifting}(2) for the cofibration $B\Z \to *$  
guarantees the existence of the cover $U$ of $X$
such that $\phi_1$ and $\phi_2$ in $X_1(U)$. This proves one half of \ref{locallifting}(2) for the 
map $X_1 \to X_0\times^{h}_\m X_0$ and the other half is automatic as $X_1(X)$ is
discrete.

For general $\m$, the fact that the map is a weak equivalence follows from the homotopy invariance of 
the homotopy fiber product (Lemma \ref{levelwisefp}).

Since weak equivalences between fibrant objects in $P(\cC,\Gr)_L$ are levelwise weak equivalences, if 
$\m$ is a stack, the map is a levelwise weak equivalence.
\end{proof}

\begin{remark}
\label{incfullsubcat}
If $\m$ is a stack, the statement that $X_1 \to X_0 \times^{h}_\m X_0$ is a
levelwise weak equivalence means that evaluating at each $X \in \cC$
$$ (X_0,X_1)(X) \to \m(X)  $$
is bijective on $\Hom$ sets and that two objects with the same image in $\m(X)$ 
are already isomorphic on $(X_0,X_1)(X)$. 
Thus this map is equivalent to the inclusion
of a full subcategory of $\m(X)$ for each $X \in \cC$.

If $\m$ is not a stack, composing the map with a fibrant replacement for $\m$ shows that 
 $(X_0,X_1)(X) \to \m(X)$ is injective on morphisms and isomorphism classes.
\end{remark}
 

\section{Representable Morphisms}

We begin by giving a definition of representable morphism in $P(\cC,\Gr)$
generalizing the one for stacks in \cite[Definition 4.2]{DM}
\footnote{ In \cite[3.9]{LM-B} for $\cC=\Aff_{\text{\'etale}}$ 
such morphisms are called schematic.}.  
Classically the definition of representable morphism applies only to
maps between stacks, for which the following two notions agree (by Lemma \ref{levelwisefp}).  

\begin{defn}
A morphism $\cM \ra \cN \in P(\cC, \Gr)$ is called 
\begin{itemize}
\item \dfn{strongly representable} 
if for each $X \in \cC$ and each map $X \ra \cN$, the 
homotopy fiber product $X \times^{h}_{\cN} \cM$ is levelwise weakly 
equivalent to a representable presheaf.
\item \dfn{representable} 
if for each $X \in \cC$ and each map $X \ra \cN$, the
homotopy fiber product $X \times^h_{\cN} \cM$ is weakly 
equivalent to a representable presheaf.
\end{itemize}
\end{defn}

\begin{note}
Note the following easy consequence of the homotopy invariance of the homotopy pullback:
If $f$ and $g$ are weakly equivalent morphisms in $P(\cC,\Gr)_L$, (i.e. there exist
$\alpha,\beta$ weak equivalences such that $\alpha \circ f = g \circ \beta$) 
then $f$ is representable if and only if $g$ is.
\end{note}

Representability allows one to extend certain properties of morphisms in $\cC$
to arbitrary presheaves of groupoids.
\begin{defn}
\label{extensionofproperties}
Let $P$ be a property of morphisms in $\cC$  
\footnote{In the usual definition it is also required that $P$ is local on the target
and stable under pullback (as in Definition \ref{localontarget}).  If $P$ is not stable
under pullback then property $P$ for representable functors will be a stablized version
of the origianl property.  For our purposes neither of these extra requirement makes a difference.}. 
We say $f\colon \cM \to \cN$ 
satisfies property P if for all maps $X \to \n$ with $X \in \cC$, the map 
$X \times_\cN^h \cM \to X $
is weakly equivalent to a map in $\cC$ which satisfies property P.

Similarly, a collection $\{ \cU_i \to \cN\}$ is a \emph{cover} if 
for each $X \to \m$ with $X \in \cC$, $\{ \cU_i\times^h_{\cN} X \to X \}$
is weakly equivalent to a cover in $\cC$.
\end{defn}
Notice that if $f$ satisfies property $P$ as above then it is 
necessarily representable.

Given a presheaf of groupoids $F$ recall that $\pi_0 F$ is the presheaf 
of groupoids defined by $(\pi_0 F)(X) = \pi_0 (F(X))$.

\begin{prop} \label{representable}
A map $f \colon \cM \ra \cN \in P(\cC,\Gr)$ is representable iff 
any fibration $p \colon \cM' \to \cN'$ weakly equivalent to $f$ is 
strongly representable.
In that case, for each map $X \to \m$, 
(the sheaf) $\pi_0(X \times_\m \m')$ is isomorphic to a representable.
\end{prop}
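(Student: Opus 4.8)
The plan is to prove both directions by unwinding the definitions through the behavior of homotopy pullbacks under fibrant replacement, using the fact (Lemma \ref{levelwisefp}) that the homotopy fiber product of Definition \ref{fiber-product} computes the homotopy pullback in $P(\cC,\Gr)_L$. First I would observe that ``representable'' is invariant under weak equivalence of morphisms (this is the Note above), so it suffices to show that \emph{some} fibration weakly equivalent to $f$ is strongly representable if and only if $f$ is representable, and then deduce that \emph{every} such fibration is. Given $f$ representable, factor it in $P(\cC,\Gr)_L$ as a trivial cofibration followed by a fibration $\cM \we \cM' \fib \cN' $ (with $\cN' = \cN$, say, or replace $\cN$ if needed); by right properness of $P(\cC,\Gr)_L$ \cite[Corollary 5.8]{H}, for any $X \to \cN'$ the strict pullback $X \times_{\cN'} \cM'$ already computes the homotopy pullback, hence is weakly equivalent to the representable guaranteed by representability of $f$. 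So $X \times_{\cN'} \cM'$ is weakly equivalent to a representable; the content that remains is to upgrade ``weakly equivalent in $P(\cC,\Gr)_L$'' to ``\emph{levelwise} weakly equivalent'', which is exactly what ``strongly representable'' demands.

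The main step, and the one I expect to be the principal obstacle, is precisely this upgrade: showing that if a strict pullback $X \times_{\cN'} \cM'$ (with $\cM' \fib \cN'$ a local fibration and $X \in \cC$ representable, hence a sheaf of \emph{discrete} groupoids) is $L$-weakly equivalent to a representable $Y \in \cC$, then in fact it is levelwise weakly equivalent to $Y$. The key structural input is that $X \times_{\cN'} \cM'$ is a homotopy pullback of $\cN'$-stacks when $\cN'$ is taken fibrant, so it is itself a stack (the Remark after Lemma \ref{levelwisefp}), and any $L$-weak equivalence between a representable and a stack is a levelwise weak equivalence since weak equivalences between fibrant objects in $P(\cC,\Gr)_L$ are levelwise (as used in Lemma \ref{ass-stack}). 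One must be a little careful that $X$ and $Y$, being representable on a subcanonical site, are themselves $L$-fibrant up to the issue that discrete presheaves of sets that are sheaves are stacks; I would cite the relevant statement from \cite{H} that representable presheaves are stacks on a subcanonical site. Thus: $X \times_{\cN'} \cM'$ is a stack, $Y$ is a stack, and the $L$-weak equivalence between them is levelwise — giving strong representability of $p$.

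For the converse, if some fibration $p \colon \cM' \to \cN'$ weakly equivalent to $f$ is strongly representable, then for each $X \to \cN'$ the homotopy fiber product $X \times^h_{\cN'} \cM'$ is (by right properness again) computed by the strict pullback, which is levelwise weakly equivalent, hence $L$-weakly equivalent, to a representable; transporting along the weak equivalence $p \simeq f$ and using homotopy invariance of the homotopy pullback shows $X \times^h_{\cN} \cM$ is $L$-weakly equivalent to a representable, i.e.\ $f$ is representable. Finally, for the last sentence: with $\cM' \fib \cN$ a local fibration weakly equivalent to $f$ and $X \to \cM$ (equivalently $X \to \cN$ after composing), the strict pullback $X \times_\cM \cM'$ — which by the above is a stack, levelwise weakly equivalent to a representable $Y$ — has $\pi_0$ of each groupoid $(X \times_\cM \cM')(Z)$ in bijection with $\pi_0$ of $Y(Z) = \Hom(Z,Y)$ (a discrete set), naturally in $Z$; since $\pi_0$ of a strict pullback of presheaves of groupoids is computed levelwise and sheafifies to the sheaf $\pi_0(X \times_\cM \cM')$, this sheaf is isomorphic to the sheaf represented by $Y$, as claimed. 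The one subtlety to handle with care here is that $\pi_0$ does not in general commute with sheafification, so I would phrase the conclusion in terms of the \emph{sheaf} $\pi_0$ and use that a levelwise weak equivalence to a (discrete, already-sheaf) representable identifies the sheafification of the presheaf $\pi_0$ with $Y$.
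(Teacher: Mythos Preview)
Your approach is essentially the paper's---upgrade the $L$-weak equivalence between $X\times_{\cN'}\cM'$ and a representable to a levelwise one by arguing both objects are fibrant in $P(\cC,\Gr)_L$---but you take a detour that leaves a small gap. You reduce to proving strong representability for \emph{some} fibration $p$ (one with $\cN'$ a stack, so that you can invoke ``homotopy pullback of stacks is a stack'' to get fibrancy of the pullback), and then assert one can ``deduce that \emph{every} such fibration is''; this last deduction is never justified, and strong representability is not obviously invariant under weak equivalence among fibrations, so the reduction does not close up as stated.

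The paper sidesteps this entirely with a simpler observation that works uniformly for \emph{any} fibration $p\colon \cM'\to\cN'$ weakly equivalent to $f$: the pullback $X\times_{\cN'}\cM'\to X$ is itself a fibration in $P(\cC,\Gr)_L$ (pullback of a fibration), and $X$ is fibrant (representable on a subcanonical site), so $X\times_{\cN'}\cM'$ is fibrant---no hypothesis on $\cN'$ is needed. This one line replaces your appeal to ``homotopy fiber product of stacks is a stack'' and simultaneously handles all fibrations $p$, eliminating the some/every split. Your treatment of the converse direction and of the final $\pi_0$ statement is fine and matches the paper (which dispatches the latter as ``clear'').
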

\begin{proof}
If $p$ is strongly representable, $f$ is obviously representable. For the converse
note that since $p$ is a fibration, given $X \to \cN'$, 
$X \times_{\cN'} \cM'$ is levelwise weakly equivalent
to the homotopy fiber product $X \times_{\cN'}^h \cM'$, which is by assumption 
weakly equivalent to a representable.  Now $X \times_{\cN'} \cM' \to X$
is a fibration, representables are fibrant and weak equivalences between 
fibrant objects are levelwise weak equivalences. Hence $X \times_{\cN}^h \cM$ is
levelwise weakly equivalent to a representable.

The second statement is clear.
\end{proof}

\begin{note}
The previous lemma shows that a map $f\colon \cM \to \cN$ is representable
iff the associated map of stacks is strongly representable.
\end{note}

\subsection{Generalized algebraic stacks}
In this section we define the concept of a generalized 
algebraic presheaf of groupoids.
We first recall the definition of algebraic stack which appears in 
\cite[4.6]{DM}.  This is usually called a Deligne-Mumford stack and 
we follow suite.  We also recall the weakening of this which is usually 
called an algebraic stack \cite[4.1]{LM-B}\footnote{Note that the definitions 
in \cite{LM-B} use a weakened form of representability which only
requires that the pullback be an algebraic space.}.

\begin{defn}  Let $S$ be a scheme and let $\cC$ be the category of 
$S$-schemes in the \'etale topology.
A stack $\m$ is called a Deligne-Mumford (resp. algebraic) stack
if the diagonal $\m \ra \m  \times \m$ is representable, 
separated and quasi-compact and if it admits an \'etale 
(resp. smooth) cover $X \ra \m$ with $X \in \cC$.
\end{defn}

\begin{defn} \label{ga}
Let $\cC$ be a site.  We say that $\m \in P(\cC,\Gr)$ 
is \emph{generalized algebraic} if its diagonal is representable and 
there is a cover $X \ra \m$ with $X \in \cC$.
\end{defn}

\begin{note}
The condition that the diagonal of $\m$ be representable
is equivalent to the requirement that for all $X \ra \m, Y \ra \m$,
with $X,Y \in \cC$ the product $X \times^h_\m Y$ is weakly equivalent to a 
representable.
\end{note}

\begin{lemma}
The definition of generalized algebraic is invariant under weak equivalence. 
Thus a presheaf of groupoids is generalized algebraic if
and only if its stackification is generalized algebraic.
\end{lemma}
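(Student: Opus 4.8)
The plan is to show that the two defining conditions for generalized algebraic---representability of the diagonal, and the existence of a cover $X \to \m$ with $X \in \cC$---are each invariant under weak equivalence in $P(\cC,\Gr)_L$; the second sentence then follows immediately by applying this to the stackification map $\m \to \m_{st}$, which is by definition a weak equivalence. Let $\alpha\colon \m \we \n$ be a weak equivalence in $P(\cC,\Gr)_L$; I would prove that $\m$ is generalized algebraic if and only if $\n$ is.

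First I would handle the diagonal. The square relating $\m \to \m\times\m$ and $\n \to \n\times\n$ via $\alpha$ and $\alpha\times\alpha$ commutes, and since $P(\cC,\Gr)_L$ is a proper model category and localization preserves products of weak equivalences (or, more elementarily, since a product of two weak equivalences is again a weak equivalence---the local lifting conditions of Definition \ref{locallifting} are checked componentwise), the map $\alpha\times\alpha$ is a weak equivalence. Thus the diagonals of $\m$ and $\n$ are weakly equivalent morphisms, and by the Note following the definition of representable morphism, one diagonal is representable if and only if the other is. Equivalently, one may use the characterization in the Note after Definition \ref{ga}: for $X,Y\in\cC$ mapping to $\n$, the homotopy fiber product $X\times^h_\n Y$ is, by homotopy invariance of the homotopy pullback (Lemma \ref{levelwisefp}) applied to the weak equivalence $\m\we\n$, weakly equivalent to $X\times^h_\m Y$, so one is weakly equivalent to a representable exactly when the other is.

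Next, the cover condition. If $X \to \m$ is a cover, I claim the composite $X \to \m \we \n$ is a cover. By Definition \ref{extensionofproperties}, this means that for every $Y \to \n$ with $Y \in \cC$, the map $X \times^h_\n Y \to Y$ is weakly equivalent to a cover in $\cC$. But pulling back along $\alpha$, homotopy invariance of the homotopy fiber product gives a weak equivalence $X\times^h_\n Y \we X\times^h_\m Y$ over $Y$ (here one may first replace $Y\to\n$ up to homotopy by a map factoring through $\m$, using that $\alpha$ is a weak equivalence, i.e. surjective enough on objects locally; more cleanly, one observes the two pullback diagrams are connected by a levelwise weak equivalence of diagrams and invokes Lemma \ref{levelwisefp}), and $X\times^h_\m Y \to Y$ is weakly equivalent to a cover in $\cC$ because $X\to\m$ was a cover. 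Hence $X\times^h_\n Y \to Y$ is too. The converse direction (from a cover over $\n$ to one over $\m$) is symmetric, using $\alpha^{-1}$ in the homotopy category, or just the same argument with the roles of $\m$ and $\n$ exchanged.

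The only subtlety---the step I expect to be the main obstacle---is the bookkeeping in the cover condition: the test object $Y$ maps to $\n$, not to $\m$, so one cannot directly form $Y\times^h_\m(\blank)$. The fix is that $\alpha$ being a local weak equivalence lets one, after passing to a cover of $Y$, lift $Y\to\n$ through $\m$ up to homotopy (this is exactly condition \ref{locallifting}(1)); and the property of being a cover is local on the target, so it suffices to check it after such a refinement. Once this reduction is in place the argument is purely formal homotopy invariance. I would spell this reduction out carefully and leave the rest as routine.
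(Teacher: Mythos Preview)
Your proposal is correct and takes essentially the same approach as the paper, whose entire proof is the single sentence ``If $X \to \m$ is a cover and $\n \to \m$ is a weak equivalence, the local lifting conditions \ref{locallifting} provide a cover of $\n$''---the diagonal condition is left implicit, relying on the earlier Note that representability is invariant under weak equivalence, exactly as you argue. Your treatment is simply more detailed: in particular, your explicit identification of the bookkeeping subtlety (the test object $Y$ maps to $\n$, not $\m$) and its resolution via local lifting plus locality on the target is more careful than anything the paper spells out.
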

\begin{proof}
If $X \to \m$ is a cover and $\n \to \m$ is a weak equivalence, 
the local lifting conditions \ref{locallifting} provide a cover of $\n$.
\end{proof}

\section{Characterization of the generalized algebraic stacks}

In this section we give a homotopy theoretic characterization of generalized
algebraic stacks (Theorem \ref{char-alg-stacks}).  For this we will need the following definition 
which generalizes faithfully flat descent of morphisms \cite[Theorem VIII.2.1]{SGA}.

\begin{defn}  
\label{descformorph}
Given a site $\cC$ consider the presheaf of groupoids
on $\cC$ defined on objects by $X \mapsto iso(\cC/X)$ and on morphisms via pullback.
We say that the site $\cC$ satisfies \emph{descent for morphisms} if
this is a stack.
\end{defn}

\begin{defn}
\label{localontarget}
We say that a topology on $\cC$ is \emph{local} if the notion of cover is local on the target. This means
that if $\{ U_i \to X\}$ is a cover and $\{ V_j \to X\}$ is a collection of morphisms such that 
$\{V_j \times_X U_i \to U_i\}$ is a cover for each $i$ then $\{ V_k \to X\}$ is also a cover.
\end{defn}

\begin{thm}
\label{char-alg-stacks}
Let $\cC$ be a site which is local and satisfies descent for morphisms. 
Then $\m$ is a generalized algebraic presheaf of groupoids if and only if 
$\m$ is weakly equivalent in $P(\cC,\Gr)_L$ to a groupoid object 
$(X_o, X_m)$ in $\cC$, for which the domain map $X_m \ra X_o$ is a cover.
\end{thm}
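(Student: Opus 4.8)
The plan is to prove the two implications separately, using Lemma \ref{ass-stack} as the bridge between a groupoid object $(X_0,X_1)$ and its stackification. For the ``if'' direction, suppose $\m \simeq (X_0, X_1)$ with $X_1 \to X_0$ a cover in $\cC$. First I would check that $X_0 \to \m$ is a cover: given any $Y \to \m$ with $Y \in \cC$, Lemma \ref{ass-stack} identifies $X_0 \times^h_\m X_0$ with $X_1$, and more generally a short computation with the homotopy fiber product (Definition \ref{fiber-product}) plus the local lifting conditions shows $X_0 \times^h_\m Y$ is weakly equivalent to the pullback $X_1 \times_{X_0} Y$ along $Y \to X_0 \to \m$, which is a cover in $\cC$ because covers are stable under pullback. (Here is where one uses that the topology is local on the target, to patch the local choices of $Y \to X_0$ coming from the lifting conditions into a genuine cover.) Then for the diagonal: a map $X \times^h_\m Y$ for $X, Y \in \cC$ mapping to $\m$ is, again via Lemma \ref{ass-stack} and the description of the homotopy fiber product, weakly equivalent to $X \times_{X_0} X_1 \times_{X_0} Y$, a pullback of objects and covers in $\cC$, hence representable since $\cC$ is closed under pullbacks. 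So $\m$ is generalized algebraic.

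For the ``only if'' direction, suppose $\m$ is generalized algebraic with cover $p \colon X_0 \to \m$, $X_0 \in \cC$. Set $X_1 := X_0 \times^h_\m X_0$; since the diagonal of $\m$ is representable, $X_1$ is weakly equivalent to a representable, which I will also call $X_1 \in \cC$. The two projections give maps $X_1 \to X_0$, and the fact that $X_0 \to \m$ is a cover means (after choosing $Y = X_0$ in the definition of cover) that the domain map $X_1 \to X_0$ is weakly equivalent to a cover in $\cC$; since both are maps in $\cC$ and $\cC$ is subcanonical, this weak equivalence is an isomorphism, so $X_1 \to X_0$ is itself a cover. The simplicial-groupoid structure (composition, identity, inverse) on the nerve of $p$ transports along these weak equivalences to make $(X_0, X_1)$ a groupoid object in $\cC$ — this is a diagram chase using that the relevant higher fiber products $X_0 \times^h_\m X_0 \times^h_\m X_0$ etc. are again representable and that $\cC$ is closed under finite products and pullbacks. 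Finally, the canonical map $(X_0, X_1) \to \m$ is a weak equivalence in $P(\cC,\Gr)_L$: it is essentially surjective locally because $X_0 \to \m$ is a cover (giving local lifting condition \ref{locallifting}(1) and the half of (2) for $\partial\Delta^1 \to \Delta^1$), and fully faithful locally because $X_1 \simeq X_0 \times^h_\m X_0$ (giving the half of (2) for $B\Z \to \star$).

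The main obstacle is the last step of the ``only if'' direction: promoting the cover $X_0 \to \m$ and the representable $X_1 = X_0\times^h_\m X_0$ to an honest \emph{groupoid object in $\cC$} whose stackification recovers $\m$. The subtlety is that a priori we only control $X_0 \times^h_\m X_0$ up to weak equivalence, and the associativity and unit constraints for the groupoid structure are equalities between maps of representables that only hold \emph{after} stackification; one must show they already hold in $\cC$. This is exactly where the hypothesis that $\cC$ \emph{satisfies descent for morphisms} enters — it guarantees that a morphism in $\cC$ which becomes an equality locally (i.e. after pulling back along a cover) is already an equality, so the simplicial identities descend from the nerve of the cover back down to $\cC$. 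I would isolate this as the technical heart of the argument, phrasing it as: the presheaf $X \mapsto \Iso(\cC/X)$ being a stack lets one rectify the homotopy-coherent groupoid $(X_0, X_0\times^h_\m X_0, \dots)$ to a strict groupoid object in $\cC$. The remaining verifications — homotopy invariance of everything in sight, stability of covers under pullback, and the bookkeeping of local lifting conditions — are routine given Lemmas \ref{levelwisefp} and \ref{ass-stack}.
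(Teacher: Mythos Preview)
You have the roles of the two hypotheses swapped between the two directions, and this creates a genuine gap in your ``if'' direction.

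In the paper, the ``only if'' direction (generalized algebraic $\Rightarrow$ groupoid object) uses \emph{neither} locality nor descent for morphisms. Given a cover $X_0 \to \m$ (with $\m$ a stack), take $X_m$ to be the representable levelwise equivalent to $X_0 \times^h_\m X_0$. The point you identify as the ``main obstacle''---rectifying the homotopy-coherent groupoid to a strict one---is automatic: $X_0$ and $X_m$ are discrete presheaves of groupoids, so there are no nontrivial homotopies between maps among them, and a groupoid object in the homotopy category is already a groupoid object on the nose. No descent hypothesis is needed here; your sketch for this direction would go through, just with an unnecessary appeal to descent.

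The real work, and the real use of both hypotheses, is in the ``if'' direction. Your argument says: given $Y \to \m$ with $Y \in \cC$, the fiber product $X_0 \times^h_\m Y$ is weakly equivalent to $X_1 \times_{X_0} Y$ ``along $Y \to X_0 \to \m$''. But an arbitrary $Y \to \m$ does not factor through $X_0$; it only does so after passing to a cover $U \to Y$. So you only know that $X_0 \times^h_\m Y$ is \emph{locally} representable, namely $U \times_Y (X_0 \times^h_\m Y) \simeq U \times_{X_0} X_1$. Locality of the topology concerns when a morphism is a cover; it does not let you glue objects of $\cC$. To descend the local representable $U \times_{X_0} X_1 \to U$ along $U \to Y$ to a representable $V' \to Y$ with $V' \simeq Y \times^h_\m X_0$, you need precisely that $X \mapsto \Iso(\cC/X)$ is a stack---i.e., descent for morphisms. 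This is the technical heart of the paper's Proposition~\ref{descentformorph}, and it is missing from your sketch (the same gap recurs in your diagonal argument, where you write $X \times^h_\m Y \simeq X \times_{X_0} X_1 \times_{X_0} Y$ as if $X,Y$ mapped to $X_0$). Locality is then used only at the very end, to conclude that $V' \to Y$ is a cover from the fact that its pullback along $U \to Y$ is one.
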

The proof is broken down into the following two propositions.
\begin{prop}
Let $\m$ be a generalized algebraic presheaf of groupoids and $X \ra \m$ be 
a cover (in the sense of Definition \ref{extensionofproperties}) with $X \in \cC$. 
Let $X_m$ denote the representable weakly equivalent to 
$X \times^h_{\m} X$.
Then the pair $(X, X_m)$ is a groupoid object in $\cC$ and the natural map 
$(X, X_m) \ra \m$ is a weak equivalence.
\end{prop}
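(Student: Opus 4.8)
The plan is to produce the groupoid structure maps by exploiting the universal property of the homotopy fiber product together with the representability hypotheses, and then to verify the weak equivalence $(X,X_m)\to\m$ via the local lifting conditions of Definition \ref{locallifting}. First I would fix, once and for all, a representable $X_m$ together with a chosen weak equivalence $X_m \we X\times^h_\m X$; composing with the two projections $p_1,p_2\colon X\times^h_\m X \to X$ gives the domain and range maps $d,r\colon X_m \to X$, which are representable morphisms of presheaves between representables, hence (since $\cC$ is closed under pullback and the Yoneda embedding is full and faithful into sheaves) genuine morphisms in $\cC$. That $d$ is a cover is exactly the hypothesis $X\to\m$ is a cover, unwound through Definition \ref{extensionofproperties}: pulling back $X\times^h_\m X \to X$ along any $Y\to X$ gives $X\times^h_\m Y \to Y$, which is weakly equivalent to a cover in $\cC$, and by the note after Definition \ref{ga} applied symmetrically this identifies $X_m$ with the representable witnessing that $d$ is a cover.

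Next I would construct the identity, composition, and inverse maps. The identity $e\colon X\to X_m$ comes from the canonical diagonal map $X\to X\times^h_\m X$ (take $a=b$, $\phi=\mathrm{id}$) using the universal property stated just after Definition \ref{fiber-product}, transported through the chosen equivalence; since source and target are representable and the composite with $d$ and $r$ is the identity on $X$, this is a morphism in $\cC$. For composition I would form the fiber product $X_m \times_{X,r,d} X_m$ in $\cC$ — which exists and, because $d$ is a cover hence in particular the relevant maps are well-behaved, is again representable — and observe that there is an evident map from (a representable weakly equivalent to) this to $X\times^h_\m X$ obtained by splicing the two isomorphisms in $\m$; by Proposition \ref{representable} and the fact that all objects in sight are fibrant and the maps between them are fibrations up to levelwise equivalence, this descends to an actual morphism $m\colon X_m\times_X X_m \to X_m$ in $\cC$. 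The inverse map is analogous, swapping the roles of the two projections. The groupoid axioms (associativity, unit, inverse laws) then hold because they hold after applying the faithful functor to the homotopy fiber product, where they are immediate from the definition of morphisms in $X\times^h_\m X$ and the functoriality of the universal property; since $\cC\to\Shv(\cC)$ is faithful, the identities lift.

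Finally I would check that $(X,X_m)\to\m$ is a weak equivalence in $P(\cC,\Gr)_L$ using Lemma \ref{levelwisefp}'s characterization, i.e. the local lifting conditions. Surjectivity-up-to-cover on objects (condition (1)) is precisely the statement that $X\to\m$ is a cover, hence in particular a representable morphism whose pullbacks are covers, so for any $a\in\m(Y)$ there is a cover $U\to Y$ lifting $a$ into $X(U)$. For condition (2): given $a,b\in X(Y)$ and an isomorphism $i(a)\we i(b)$ in $\m(Y)$, this datum is exactly an object of $(X\times^h_\m X)(Y)$ lying over $(a,b)$, and since $X_m \we X\times^h_\m X$ is a weak equivalence there is a cover $U\to Y$ over which it lifts to $X_m(U)$ — giving the required local lift; uniqueness (the other half of (2), for the cofibration $B\Z\to\star$) is automatic because $X_m(Y)$ and $X(Y)$ are discrete, so there is nothing to check on morphisms. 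The main obstacle I anticipate is the bookkeeping in the composition step: one must be careful that the iterated fiber product $X_m\times_X X_m$ really is representable and that the splicing map to the homotopy fiber product is compatible with the chosen equivalence $X_m\we X\times^h_\m X$ rather than merely some equivalence — this is where the hypothesis that the topology is local on the target and satisfies descent for morphisms does real work, ensuring the various locally-defined lifts glue, and where I would spend the most care; once the structure maps are in hand, the weak-equivalence verification is essentially a direct unwinding of the definitions.
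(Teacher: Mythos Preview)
Your overall architecture is sound and close to the paper's, but there is one substantive confusion and one missed simplification worth flagging.

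First, the confusion: you say the hypotheses that the topology is local on the target and satisfies descent for morphisms ``do real work'' in building the composition map. They do not, and they are not hypotheses of this proposition at all --- they are used only in the converse direction (Proposition~\ref{descentformorph}). The obstacle you anticipate (gluing locally defined lifts to produce $m\colon X_m\times_X X_m\to X_m$) is illusory: once you know $X_m$ and $X_m\times_X X_m$ are representables they are fibrant, cofibrant and discrete, so any map between them in $\Ho(P(\cC,\Gr)_L)$ is realized by a unique honest map, which by Yoneda lies in $\cC$. No descent datum needs to be assembled.

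Second, the simplification you miss is the paper's first move: reduce to the case that $\m$ is a stack. This is free (generalized algebraic is weak-equivalence invariant), and it buys you that $X_m\to X\times^h_\m X$ is a \emph{levelwise} weak equivalence (Lemma~\ref{ass-stack}), so $X\times^h_\m X$ is already levelwise discrete and your structure maps are literally read off rather than extracted from the homotopy category. The paper then factors $X\we\tilde X\fib\m$ and observes that $(\tilde X,\tilde X\times_\m\tilde X)$ is a strict groupoid object, transferring the structure to $(X,X_m)$ in one stroke. For the second local lifting condition the paper invokes Remark~\ref{incfullsubcat}: once $\m$ is a stack, $(X,X_m)\to\m$ is levelwise the inclusion of a full subcategory, so condition~(2) holds on the nose, not just locally. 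Your argument for~(2) via the local lifting conditions for $X_m\we X\times^h_\m X$ also works, but is a detour.
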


\begin{proof}
It suffices to prove this for $\m$ a stack.
Given a generalized algebraic presheaf of groupoids there exists a 
representable morphism $X \ra \m$ which is a cover. 
Let $X \we \tilde{X} \fib \m$ be a factorization as a trivial cofibration
followed by a fibration, and let $\tilde{X}_\bullet$ denote
the nerve of this cover.

$\tilde{X}\times_\m X$ is levelwise weakly equivalent to 
a representable $X_m$ and $X_m \ra X$ is a cover.  
Since $(\tilde{X}, \tilde{X}\times_\m \tilde{X})$ is a groupoid object in 
the homotopy category so is $(X, X_m)$, and as
$X$ and $X_m$ are both fibrant, cofibrant, and discrete
$(X,X_m)$ is also a groupoid object in $P(\cC,\Gr)$ and in $\cC$.

Next we show that the map $(X,X_m)=|(X, X_m)_\bullet| \ra \m$ is a weak equivalence
by verifying that it satisfies the local lifting conditions.
The first of the local lifting conditions follows from the fact that
$X \ra \m$ is a cover.  By Remark \ref{incfullsubcat} the map 
$(X,X_m) \ra \m$ is levelwise equivalent to the inclusion of a 
full subcategory and so the second of the local lifting 
conditions is also satisfied (even not locally).
\end{proof}

\begin{prop} 
\label{descentformorph}
Let $\cC$ be a site which is local and satisfies descent for morphisms.
If $(X_o,X_m)$ is a groupoid object in $\cC$, with $X_m \ra X_o$ a 
cover then the associated stack 
$\cM_{(X_o,X_m)}$ is generalized algebraic.  
\end{prop}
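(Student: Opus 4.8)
The plan is to verify the two conditions in Definition \ref{ga} for $\cM = \cM_{(X_o,X_m)}$: that the diagonal is representable, and that there is a cover by an object of $\cC$. Since being generalized algebraic is invariant under weak equivalence, it suffices to work with the (non-fibrant) presheaf of groupoids $(X_o, X_m)$ and its canonical map to $\cM_{(X_o,X_m)}$; in particular the composite $X_o \to (X_o,X_m) \to \cM_{(X_o,X_m)}$ is the candidate cover.

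First I would dispose of the cover condition. Given any $Y \to \cM_{(X_o,X_m)}$ with $Y \in \cC$, I need $X_o \times^h_{\cM_{(X_o,X_m)}} Y \to Y$ to be weakly equivalent to a cover in $\cC$. Using Lemma \ref{ass-stack} applied to the groupoid object $(X_o, X_m)$, the map $X_m \to X_o \times^h_{\cM_{(X_o,X_m)}} X_o$ is a weak equivalence; more generally, pulling back along $Y \to \cM_{(X_o,X_m)}$ and using homotopy invariance of the homotopy fiber product (Lemma \ref{levelwisefp}), one identifies $X_o \times^h_{\cM_{(X_o,X_m)}} Y$ up to weak equivalence. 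The point where local-ness of the topology enters: a priori $Y \to \cM_{(X_o,X_m)}$ need not lift to $(X_o,X_m)$, but by the local lifting conditions \ref{locallifting} it lifts after passing to a cover $\{V_j \to Y\}$; over each $V_j$ the pullback is (weakly equivalent to) the base change of $X_m \to X_o$ along a map $V_j \to X_o$, hence a cover in $\cC$ since $P$ = ``being a cover'' is stable under pullback; then Definition \ref{localontarget} assembles these into the statement that $X_o \times^h_{\cM_{(X_o,X_m)}} Y \to Y$ is weakly equivalent to a cover.

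Next, the diagonal. I must show that for all $Y, Z \in \cC$ with maps to $\cM_{(X_o,X_m)}$, the homotopy fiber product $Y \times^h_{\cM_{(X_o,X_m)}} Z$ is weakly equivalent to a representable. By the cover condition just established, $X_o \to \cM_{(X_o,X_m)}$ is a cover, so $\{X_o \times^h Y \to Y\}$ and similarly for $Z$ give covers; passing to these covers and using that a map is a weak equivalence (resp. representable) if it is so locally, I reduce to computing $(X_o \times^h_{\cM} Y') \times^h_{\cM} (X_o \times^h_{\cM} Z')$ type products, which by associativity of homotopy fiber products and Lemma \ref{ass-stack} rewrite in terms of iterated fiber products of $X_o, X_m$ over $\cM_{(X_o,X_m)}$. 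These are all weakly equivalent to representables in $\cC$ (finite fiber products of $X_o$ and $X_m$ exist in $\cC$ by our standing hypothesis that $\cC$ is closed under pullbacks). This is where \emph{descent for morphisms} is needed: to descend the local identification of $Y \times^h_{\cM} Z$ with a representable back down to $Y$ (i.e.\ to glue the locally-defined representable objects and morphisms, since the isomorphism $\Iso(\cC/-)$ gluing data is exactly governed by the stack $X \mapsto \Iso(\cC/X)$), one invokes that $X \mapsto \Iso(\cC/X)$ is a stack.

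The main obstacle I anticipate is this last descent argument: having produced, over a cover of $Y$, a representable model $W_j \in \cC$ for the restriction of $Y \times^h_{\cM} Z$, together with descent data (isomorphisms $W_j|_{\text{overlap}} \cong W_i|_{\text{overlap}}$ satisfying cocycle identities up to coherent homotopy), I need to glue the $W_j$ to an object $W \in \cC$ with $W \simeq Y \times^h_{\cM} Z$. This is precisely what ``$\cC$ satisfies descent for morphisms'' is designed to give — the descent data for objects-of-$\cC$-over-$Y$ lives in $\Iso(\cC/-)$, a stack by hypothesis — but making the bookkeeping of the homotopy-coherent descent data precise, and checking it matches the strictification needed to land in $\cC$ rather than merely in $P(\cC,\Gr)$, is the delicate step. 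Everything else is formal manipulation of homotopy fiber products via Lemmas \ref{levelwisefp} and \ref{ass-stack} and the local lifting conditions.
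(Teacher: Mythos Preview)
Your proposal has the right ingredients and correctly isolates descent for morphisms as the crux, but the order of the two steps creates a genuine gap.

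In your cover step you argue: after passing to a cover $\{V_j \to Y\}$ the pullback $X_o \times^h_{\cM} V_j$ is weakly equivalent to $V_j \times_{X_o} X_m$, a cover in $\cC$, and then ``Definition~\ref{localontarget} assembles these'' into the statement that $X_o \times^h_{\cM} Y \to Y$ is weakly equivalent to a cover. But Definition~\ref{localontarget} is a statement about a collection of morphisms \emph{already in $\cC$}: it says that if $\{W_k \to Y\}$ in $\cC$ becomes a cover after base change along a cover, then it was a cover. You do not yet have any $W \in \cC$ with $W \simeq X_o \times^h_{\cM} Y$; you only have an object of $P(\cC,\Gr)$ which is \emph{locally} representable. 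Producing the global $W$ is exactly the descent-for-morphisms argument you postpone to the diagonal step. So the invocation of local-ness here is premature, and your diagonal step then begins ``by the cover condition just established'', making the argument circular.

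The paper's proof runs in the opposite order and thereby avoids the circularity: it first proves that $X_o \to \cM$ is \emph{representable} by writing down the simplicial object $U_\bullet \times^h_{\cM} X_o$ over $U_\bullet$ (where $U \to Y$ is the cover you also use), observing each level is levelwise equivalent to a representable, extracting honest descent data for an object of $\cC/Y$ from the simplicial identities, and invoking descent for morphisms to produce $V' \in \cC$ with $V' \simeq Y \times^h_{\cM} X_o$. Only after $V'$ exists does it apply local-ness of the topology to the square
\[
\xymatrix{ U \times_{X_o} X_m \ar[r] \ar[d] & V' \ar[d] \\ U \ar[r] & Y}
\]
to conclude $V' \to Y$ is a cover. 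Representability of the diagonal is then ``essentially the same argument''. Your sketch of the descent step (last paragraph) is fine; you just need to run it \emph{before} the cover step, not after.
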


\begin{proof}
First we will show that under these hypothesis $X_o \ra \m=\m_{(X_o,X_m)}$
is representable. 

Let $Y \in \cC$ and $Y \ra \m$ be a map in $P(\cC,\Gr)$.
Since $X_o \ra \m$ is 
locally surjective there is a cover $U \ra Y$ for which we
have the following factorization

$$\xymatrix{U \times_Y U \ar@{-->}[d] \ar@2[r] & U \ar@{-->}[d] \ar[r] & Y \ar[d] \\
X_m \ar@2[r]  & X_o \ar[r] &  \m }$$
By construction of the homotopy fiber product we obtain a 
simplicial diagram of fibrations $U_\bullet \times_\m^h X_o \fib U_\bullet$
augmented by $Y\times^h_\m X_0 \fib Y$.
Notice that besides $Y \times^h_\m X_o$ all of the fiber products 
$(U \times_Y U \dots \times Y_ U) \times^h_\m X_o$
are levelwise weakly equivalent to representables, for example
$$U\times_\m^h X_o=U\times_{X_o} X_o \times^h_\m X_o \we U \times_{X_o} X_m.$$
Fix $V \in \cC$ and $\alpha: U \times_\m^h X_o \we V$, and let $\bar{\alpha}$ be the 
induced isomorphism $\pi_0( U \times_\m^h X_o) \llra{\cong} V$.
Let $pr_1,pr_2$ denote the projections $U \times_Y U \ra U$.
It follows that we have weak equivalences
$$\pi_0( U \times_Y U \times^h_\m X_o) \llra{pr_i^*\bar{\alpha}} pr_i^*V,$$
and so we obtain an isomorphism over $U \times_Y U$,  
$$[(pr_2^*\bar{\alpha})^{-1} \circ pr_1^*\bar{\alpha}]: pr_1^* V \ra pr_2^* V$$
The simplicial identities imply that
this isomorphism satisfies the hypothesis for descent for morphisms, which implies
that there exists $V' \ra Y \in \cC$ 
together with an isomorphism $V\cong V'\times_Y U$
making the following diagram commute
$$\xymatrix{
U \times_Y U \times_Y U \times^h_\m X_o \ar@3[r] \ar[d]^\sim  &
U \times_Y U \times^h_\m X_o \ar@2[r] \ar[d]^\sim  & U \times^h_\m X_o 
\ar[d]^\sim  \ar[r] & Y\times^h_\m X_o  \ar@/^2pc/[dd]\\
U \times_Y U \times_Y U \times_Y V' \ar@3[r] \ar[d] &
U \times_Y U \times_Y V' \ar@2[r] \ar[d] & U \times_Y V' 
\ar[d] \ar[r] & V'  \ar[d] \\
U \times_Y U \times_Y U \ar@3[r] & U\times_Y U \ar@2[r] & U \ar[r] & Y}$$
It follows that $V'$ is weakly equivalent to
$$|U_\bullet \times_Y V'| \llla{\sim} |U_\bullet \times_\m X_o|\we Y \times^h_\m X_o.$$
Essentially the same argument implies that the diagonal 
$\m \ra \m \times \m$ is representable.  

To see that $X_o \ra \m$ is a cover, consider the pullback square
$$\xymatrix{ U \times_Y V' \cong U \times_{X_o} X_m \ar[r] \ar[d] &
V'  \cong Y \times^h_\cM X_o \ar[d] \\ U \ar[r] & Y.}$$
The bottom horizontal arrow and left vertical one are covers
since $X_m \ra X_o$ is a cover.  Since the 
topology on $\cC$ is local $V' \ra Y$ is a cover.
\end{proof}

By \cite[Expos\'e IX]{SGA}, we have the following corollary.
\begin{cor} Let $\cC =$ Schemes in the {\'e}tale topology. Then 
$\m$ is a Deligne-Mumford stack in the sense of \cite[Definition 4.6]{DM}
\footnote{In geometric situations it is usually also
required that the diagonal of $\m$ be quasi-compact and separated.} 
if and only if it is weakly equivalent to a groupoid object $(X_o,X_m)$ in $\cC$, with
 $X_m \ra X_o$ a cover.
\end{cor}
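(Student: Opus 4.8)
The plan is to deduce the corollary directly from Theorem \ref{char-alg-stacks} by checking that the site $\cC$ of $S$-schemes with the \'etale topology satisfies the two hypotheses of the theorem --- namely that the \'etale topology is \emph{local} (local on the target in the sense of Definition \ref{localontarget}) and that it \emph{satisfies descent for morphisms} (Definition \ref{descformorph}) --- and then reconciling the resulting statement about generalized algebraic presheaves with the classical notion of a Deligne--Mumford stack. First I would observe that Theorem \ref{char-alg-stacks} already gives the equivalence ``$\m$ generalized algebraic $\iff$ $\m$ weakly equivalent to a groupoid object $(X_o,X_m)$ with $X_m\to X_o$ a cover,'' so the only work is (a) verifying the hypotheses and (b) identifying ``generalized algebraic'' with ``Deligne--Mumford'' in this setting.

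For the hypotheses: that the \'etale topology is local on the target is standard and essentially immediate from the fact that being an \'etale cover can be checked \'etale-locally on the base; I would cite this or spell out the one-line descent argument. The substantive input is descent for morphisms, i.e.\ that the prestack $X\mapsto \Iso(\cC/X)$ (isomorphisms of \'etale-local objects, or rather of all $\cC$-objects over $X$) is a stack for the \'etale topology. This is exactly faithfully flat (in particular \'etale) descent for morphisms of schemes and for isomorphisms between schemes, which is the content of \cite[Expos\'e IX]{SGA} (and the more general flat statement \cite[Theorem VIII.2.1]{SGA} invoked before Definition \ref{descformorph}); so I would simply invoke \cite[Expos\'e IX]{SGA} as the paper's sentence preceding the corollary already signals.

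The remaining point --- and the one I expect to be the only real subtlety --- is matching the two notions of ``algebraic.'' The classical Deligne--Mumford definition recalled in the excerpt asks for a representable, separated, quasi-compact diagonal together with an \emph{\'etale} cover $X\to\m$, whereas Definition \ref{ga} only asks for a representable diagonal and \emph{some} cover $X\to\m$ (which, by Definition \ref{extensionofproperties}, means the base change to any $Y\in\cC$ is weakly equivalent to an \'etale cover in $\cC$, since covers in the \'etale topology are \'etale surjections). So on one side the cover in Theorem \ref{char-alg-stacks} is automatically \'etale because that is what ``cover'' means for this site; and on the other side the separated/quasi-compact hypotheses on the diagonal are precisely the extra geometric conditions flagged in the corollary's footnote, which I would explicitly set aside. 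Thus I would state: dropping those extra diagonal conditions, ``Deligne--Mumford stack'' and ``generalized algebraic presheaf of groupoids'' coincide on $\cC=\Sch_{\text{\'et}}$, because an \'etale cover is the same thing as a cover for the \'etale topology, and having a representable diagonal is verbatim Definition \ref{ga}; one also uses the earlier lemma that being generalized algebraic is invariant under weak equivalence, so there is no loss in passing between $\m$ and its stackification. With these identifications in hand, the corollary is just the specialization of Theorem \ref{char-alg-stacks}, and I would close by noting that when one does wish to retain the quasi-compact separated diagonal, the groupoid object $(X_o,X_m)$ produced by the theorem inherits exactly those properties on $X_m\to X_o$, so the refined statement holds as well.
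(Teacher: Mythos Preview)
Your proposal is correct and follows exactly the paper's approach: the paper offers no proof beyond the sentence ``By \cite[Expos\'e IX]{SGA}, we have the following corollary,'' so it is simply invoking descent for morphisms in the \'etale topology and specializing Theorem~\ref{char-alg-stacks}. Your write-up just makes explicit the routine verifications (locality of the \'etale topology, the identification of ``cover'' with ``\'etale cover,'' and the footnoted omission of the separated/quasi-compact conditions on the diagonal) that the paper leaves implicit.
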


The flat topology on affine schemes satisfies descent for morphisms by 
\cite[Theorem VIII.2.1]{SGA} so we have the following corollary.
\begin{cor}
Let $\cC=$ Affine schemes in the flat topology. The generalized algebraic stacks
are those stacks which are weakly equivalent to flat Hopf algebroids.  
\end{cor}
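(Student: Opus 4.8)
The plan is to deduce this from Theorem \ref{char-alg-stacks}, applied to $\cC = \Aff_{flat}$, the (small) category of affine schemes over a fixed base equipped with the flat, i.e.\ fpqc, topology. Two things then have to be done: check that this $\cC$ is local and satisfies descent for morphisms, so that the theorem applies; and translate its conclusion --- that $\m$ is weakly equivalent to a groupoid object $(X_o,X_m)$ in $\cC$ with $X_m \to X_o$ a cover --- into the language of flat Hopf algebroids. Neither step involves homotopy theory beyond the statement of the theorem; both amount to classical faithfully flat descent together with the dictionary between affine schemes and rings.

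For the hypotheses: covers in the fpqc topology on affines are finite families $\{U_i \to X\}$ for which $\coprod_i U_i \to X$ is faithfully flat, and faithful flatness, i.e.\ flatness together with surjectivity, is stable under base change and descends along faithfully flat base change of the target. Stability of the class of covers under pullback is then immediate, and $\cC$ is local on the target (Definition \ref{localontarget}): if $\{U_i \to X\}$ is a cover and $\{V_j \to X\}$ is a family with each $\{V_j \times_X U_i \to U_i\}$ a cover, then $\bigl(\coprod_j V_j\bigr) \times_X \bigl(\coprod_i U_i\bigr) \to \coprod_i U_i$ is faithfully flat while $\coprod_i U_i \to X$ is faithfully flat, so $\coprod_j V_j \to X$ is faithfully flat by descent, i.e.\ $\{V_j \to X\}$ is a cover. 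That $\cC$ satisfies descent for morphisms (Definition \ref{descformorph}) is the assertion that $X \mapsto \Iso(\cC/X)$ is a stack; unwinding, this is exactly the statement that isomorphisms of affine $X$-schemes glue along an fpqc cover and that affine schemes over the terms of a cover equipped with descent data are effective, which is classical faithfully flat descent for affine schemes, \cite[Theorem VIII.2.1]{SGA}.

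With the hypotheses verified, Theorem \ref{char-alg-stacks} says that a presheaf of groupoids $\m$ on $\cC$ --- in particular a stack $\m$ --- is generalized algebraic if and only if it is weakly equivalent in $P(\cC,\Gr)_L$ to a groupoid object $(X_o,X_m)$ in $\cC$ with domain map $X_m \to X_o$ a cover. Under the contravariant equivalence between affine schemes and commutative rings, such a groupoid object is the same datum as a Hopf algebroid $(A,\Gamma)$, with $X_o = \Spec A$ and $X_m = \Spec \Gamma$; the domain map corresponds to one of the two unit maps $A \to \Gamma$ (the antipode interchanges them, so which one is immaterial), and the identity section $X_o \to X_m$ of the groupoid splits the projections $X_m \to X_o$, so these are automatically surjective. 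Hence ``$X_m \to X_o$ is a cover in the fpqc topology'' is equivalent to ``$A \to \Gamma$ is flat'', i.e.\ to $(A,\Gamma)$ being a flat Hopf algebroid. Assembling, a stack $\m$ on $\Aff_{flat}$ is generalized algebraic precisely when it is weakly equivalent in $P(\cC,\Gr)_L$ to a flat Hopf algebroid, which is the assertion. The only points I expect to need care are the verification that the fpqc topology is local on the target, which rests on descent of faithful flatness along faithfully flat base change, and the precise matching of ``flat Hopf algebroid'' with ``domain a cover'' --- the latter because surjectivity of the structure maps comes for free once the Hopf algebroid structure is in place.
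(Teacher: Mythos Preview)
Your proposal is correct and follows exactly the paper's approach: the paper simply remarks that the flat topology on affine schemes satisfies descent for morphisms by \cite[Theorem VIII.2.1]{SGA} and then states the corollary as an immediate application of Theorem~\ref{char-alg-stacks}. You have merely supplied the details the paper leaves implicit---the check that the topology is local on the target and the dictionary translating ``groupoid object $(X_o,X_m)$ with $X_m\to X_o$ a cover'' into ``flat Hopf algebroid''---so there is no difference in strategy.
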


\appendix

\section{Topologies on a stack}

We define two new sites $Rep/\m$ and $P(\cC,\Gr)/\m$ 
associated to a presheaf of groupoids $\m$.
We show that if $\m$ is a generalized algebraic stack then the category of sheaves $Sh(Rep/\m)$
agrees with the usual category of sheaves on $\m$.
In addition we prove a comparison theorem explaining the relation
between $Rep/\m$ and $P(\cC,\Gr)/\m$. 
\subsection{The site $\cC/\m$}

In this section we recall a site canonically associated to a presheaf of groupoids $\m$
first considered in \cite[Definition 4.10]{DM}.

\begin{defn}
\label{top-on-stack}
Let $\m$ be in $P(\cC,\Gr)$ and let $\cC/\m$ denote the 
site whose 
\begin{itemize} 
\item objects are pairs $(X,f)$, where $X\in \cC$ and $X \llra{f} \m$,
\item morphisms from $X \llra{f} \m$ to $X' \llra{g} \m$ are pairs 
$(h, \alpha)$ where $X \llra{h} X'$ and $\alpha$ is a homotopy 
$f \ra g \circ h$,
\item covers are collections of morphisms which forget to covers in $\cC$. 
\end{itemize}
\end{defn}

For a proof that this defines a Grothendieck topology see \cite[Section $2.1$]{H2}.

\begin{remark}
Given maps $f,f':X \ra \m$ a homotopy $\alpha: f \ra f'$ 
determines an isomorphism in $\cC/\m$ between the objects $f$ and $f'$.
So a presheaf $F$ on $\cC/\m$ will satisfy $F(X,f)\cong F(X,f')$. 
The category $\cC/\m$ is just the Grothendieck construction 
on the functor $\m$.
\end{remark}

\begin{remark}
Definition \ref{top-on-stack} generalizes the \'etale site \cite[4.10]{DM} of a Deligne-Mumford stack
which is the site defined above for
$\cC$ the category of schemes and \'etale maps in 
the \'etale topology.
However, there is no site $\cC$ which gives rise via \ref{top-on-stack}
to the smooth-\'etale site \cite[12.1]{LM-B} of an algebraic stack $\m$.
For example, if we take $\m$ to be a scheme, the smooth-\'etale site
is not the over category of $\m$ in some category of schemes which are 
the only kind of sites which arise via \ref{top-on-stack}.
\end{remark}

\subsection{The site $Rep/\m$}

The concept of representable morphism allows us to extend in a 
natural way the notion of cover to presheaves of groupoids
and so gives rise to the following site.

\begin{defn} 
\label{defnrepm}
For $\cM$ in $P(\cC,\Gr)$ the site $Rep/\cM$ has
\begin{itemize}
 \item Objects: strongly representable morphisms $\cN \fib \cM$, 
 \item Morphisms from $\cN_1 \llra{f_1} \m$ to $\cN_2 \llra{f_2} \m$ 
 consist of pairs $(g,\alpha)$ with $g:\cN_1 \ra \cN_2$ and $\alpha$ 
 a homotopy $f_1 \ra f_2 \circ g$.
 \item Covers: collections of morphisms $\{\cN_i \xrightarrow{u_i} \cN\}$
		such that the $u_i$ are strongly representable 
		and for each $X \ra \cN, X \in \cC$ the collection 
		$\{ X \times_{\cN}^h \cN_i \ra X \}$
		is weakly equivalent to a cover in $\cC$. 
\end{itemize} \end{defn}
Note that the pullback in $Rep/\m$ is exactly the homotopy fiber product of
Definition \ref{fiber-product}. It is also true that homotopy equivalences 
are isomorphisms in $Rep/\m$. The proof that $Rep/\m$ is a site is parallel 
to that for $\cC/\m$.

\begin{prop} 
Let $\cM$ be a generalized algebraic stack
then the category of sheaves on $Rep/\cM$ 
is equivalent to the category of sheaves on the site $\cC/\cM$.
\end{prop}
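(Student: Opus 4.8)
The plan is to exhibit a morphism of sites $\cC/\m \to Rep/\m$ (sending an object $(X,f)$ to the composite strongly representable morphism obtained by factoring $X \to \m$, or rather noting that since $\m$ is a stack with representable diagonal every $X \to \m$ with $X \in \cC$ is already strongly representable after fibrant replacement) and to prove that it is a \emph{continuous} functor inducing an equivalence on sheaf categories. The standard tool here is the comparison lemma for sites (e.g. \cite[Theorem III.4.1]{MM} or SGA4 III.4.1): if $u \colon \cC/\m \to Rep/\m$ is fully faithful, continuous (preserves covers), cocontinuous, and every object of $Rep/\m$ admits a cover by objects in the image of $u$, then the induced map on sheaf topoi is an equivalence. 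So the real content is verifying these four hypotheses.

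First I would check that $\cC/\m \to Rep/\m$ is fully faithful: a morphism in either site from $(X,f)$ to $(X',f')$ is a pair $(h,\alpha)$ with $h$ a map of the sources and $\alpha$ a homotopy $f \to f' \circ h$, so since $X, X' \in \cC$ are discrete presheaves the $\Hom$-sets literally coincide. Next, continuity: a cover in $\cC/\m$ is a family forgetting to a cover in $\cC$, and such a family is certainly a $Rep/\m$-cover since for $X \to \cN$ the pullbacks are, up to weak equivalence, the pullbacks of a $\cC$-cover, hence a $\cC$-cover. Cocontinuity (every cover of $u(X,f)$ in $Rep/\m$ is refined by the image of a cover in $\cC/\m$) follows similarly by pulling a $Rep/\m$-cover of a representable object back along the identity and using that it is then weakly equivalent to an honest cover in $\cC$, which can be chosen to consist of representables.

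The main obstacle — and the one place the hypothesis ``$\m$ generalized algebraic'' is used essentially — is the covering condition: every object $\cN \fib \m$ of $Rep/\m$ must admit a covering family by objects in the image of $\cC/\m$, i.e. by representables mapping to $\m$. Here I would use Theorem \ref{char-alg-stacks} (or just the definition of generalized algebraic): there is a cover $X \to \m$ with $X \in \cC$, and then for the strongly representable $\cN \fib \m$ the homotopy fiber product $X \times^h_\m \cN$ is weakly equivalent to a representable $Z \in \cC$ by strong representability of $\cN \to \m$, giving a map $Z \to \cN$; and the family $\{Z \to \cN\}$ is a $Rep/\m$-cover precisely because $X \to \m$ is a cover (pulling back along any $Y \to \cN \to \m$ and using that covers are stable under homotopy pullback). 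Thus $\cN$ is covered by the representable $Z$, i.e. by an object in the image of $u$. With all four hypotheses of the comparison lemma in hand, it yields the desired equivalence $\Shv(Rep/\m) \simeq \Shv(\cC/\m)$. One should also remark, as the paragraph before the proposition already notes, that homotopy pullback in $Rep/\m$ agrees with the homotopy fiber product of Definition \ref{fiber-product} and that weak equivalences are isomorphisms in $Rep/\m$, which is what makes these pullback manipulations legitimate inside the site.
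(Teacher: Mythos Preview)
Your proposal is correct and follows essentially the same approach as the paper: embed $\cC/\m$ fully faithfully into $Rep/\m$, show that every object $\cN \fib \m$ of $Rep/\m$ admits a cover by a representable (by pulling back the cover $X \to \m$ supplied by the algebraic hypothesis and using strong representability of $\cN \to \m$ to replace $X\times^h_\m \cN$ by some $Z \in \cC$), and then invoke a comparison lemma. The paper cites Tamme's Proposition~3.9.1 rather than the SGA4/MacLane--Moerdijk version you quote, so it does not separately record the continuity and cocontinuity checks, but the substance is identical; the one point the paper makes more explicit than you do is that the levelwise weak equivalence $X\times^h_\m \cN \to Z$ is a trivial fibration onto a cofibrant object, hence a homotopy equivalence and therefore an isomorphism in $Rep/\m$, which is what actually produces the map $Z \to \cN$.
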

 
\begin{proof}
Since $\cM$ is a generalized algebraic stack, $\cC/\m$ embeds in $Rep/\m$
as a full subcategory. By \cite[Proposition 3.9.1]{Ta} it is enough to see 
that any object in $Rep/\m$ is covered by an object in $\cC/\m$.

Given an object $f: \n \to \m$ in $Rep/\m$, and a cover $X \to \m$
with $X \in \cC$, $X\times^{h}_\m \n$ is levelwise weakly equivalent
to a representable $Z \cong \pi_0(X\times^{h}_\m \n)$. The quotient
map $p \colon X \times^{h}_\m \n \to Z$ is a trivial fibration and $Z$ is cofibrant
so $p$ is a homotopy equivalence and hence an isomorphism in $Rep/\m$.
It follows that $Z \to X\times^{h}_\m \n \to \n$ is a cover in $Rep/\m$.
\end{proof}

\begin{note}
One can make a definition analogous to Definition \ref{defnrepm} using the 
concept of representable instead of strongly representable morphism, but then the
result of the previous proposition would not hold as local weak equivalences
would not be isomorphisms in the category.
\end{note}

\begin{remark} 
Let $\{ f_i \colon \cU_i \to \cN\}$ be a collection of representable morphisms
and $\{\tilde{f_i}: \tilde{\cU_i} \fib \cN \}$ be the family of fibrations 
obtained by functorial factorization in $P(\cC,\Gr)_L$. Then the following are
equivalent:
\begin{enumerate}[(i)] 
\item The collection $\{f_i \colon \cU_i \to \cN \}$ is a cover in the sense of Definition \ref{extensionofproperties}.
\item The collection $\{\tilde{f_i}: \tilde{\cU_i} \fib \cN \}$ is a cover in $Rep/\m$.
\end{enumerate}
\end{remark}

\subsection{The site $P(\cC,\Gr)/\m$} We now define a site associated to $\m$ 
which is very natural from the point of view of homotopy theory and compare it 
to the ones discussed above.

\begin{thm} \label{new-GT}
Let $\cC$ be a site and $\m \in P(\cC,\Gr)_L$.  Then there is a 
Grothendieck topology on $P(\cC,\Gr)_L/\m$ in which the covers are
the sets of morphisms $\{ \cU_i \ra \cN\}$ which satisfy:
\begin{itemize}
	\item $\cU_i \fib \cN$ are fibrations,
	\item $|\cU_\bullet| \ra \cN$ is a weak equivalence.
\end{itemize}
\end{thm}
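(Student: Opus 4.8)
The plan is to verify the Grothendieck topology axioms directly for the proposed covers, translating each axiom into a statement about weak equivalences of realizations of nerves, and then reducing those statements to facts we already have: the local lifting characterization of weak equivalences (Definition \ref{locallifting}, \cite[Theorem 5.7]{H}), right properness of $P(\cC,\Gr)_L$ \cite[Corollary 5.8]{H}, and the fact (Lemma \ref{levelwisefp}) that the homotopy fiber product computes the homotopy pullback in the local model structure. First I would dispose of the isomorphism axiom: if $\cU \fib \cN$ is a trivial fibration (in particular a weak equivalence that is a fibration) then the simplicial object $\cU_\bullet$ has all its levels $\cU \times_\cN \cU \times_\cN \cdots$ weakly equivalent to $\cN$, the augmentation $|\cU_\bullet| \to \cN$ is a weak equivalence by a standard simplicial contraction/extra-degeneracy argument, so every singleton $\{\cN \xrightarrow{\sim} \cN\}$ (and more generally any isomorphism in the over category represented by a trivial fibration) is a cover.

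Next I would handle stability under base change. Given a cover $\{\cU_i \fib \cN\}$ and a morphism $\cN' \to \cN$ in $P(\cC,\Gr)_L/\m$, I replace $\cN' \to \cN$ by a fibration if necessary and form the strict pullbacks $\cU_i \times_\cN \cN' \fib \cN'$; these are fibrations since fibrations are stable under pullback. The point is that $|(\cU_\bullet \times_\cN \cN')| \to \cN'$ is a weak equivalence. Because $\cU_i \fib \cN$ are fibrations, each strict pullback agrees with the homotopy pullback, so $\cU_\bullet \times_\cN \cN'$ computes the levelwise homotopy pullback of $\cU_\bullet$ along $\cN' \to \cN$; realization is a homotopy colimit and commutes with the relevant homotopy pullback here because $P(\cC,\Gr)_L$ is right proper — more concretely, $|{-}|$ commutes with finite products and filtered colimits, and right properness lets one push the weak equivalence $|\cU_\bullet| \xrightarrow{\sim} \cN$ through the pullback along the fibration $\cN' \to \cN$. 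Either way, $|(\cU_\bullet \times_\cN \cN')| \to \cN'$ is the base change along a fibration of the weak equivalence $|\cU_\bullet| \to \cN$, hence a weak equivalence, so the pulled-back family is again a cover.

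The main obstacle is the transitivity (local character) axiom: given a cover $\{\cU_i \fib \cN\}$ and, for each $i$, a cover $\{\cV_{ij} \fib \cU_i\}$, I must show $\{\cV_{ij} \fib \cN\}$ is a cover. The composites $\cV_{ij} \fib \cU_i \fib \cN$ are fibrations, so the issue is that $|\cV_\bullet| \to \cN$ is a weak equivalence, where $\cV = \coprod_{ij} \cV_{ij}$. The clean way is a bisimplicial argument: form the bisimplicial object whose $(p,q)$ entry is the $q$-fold fiber product over $\cU=\coprod \cU_i$ of copies of $\cV$, sitting over the $p$-fold fiber product $\cU_\bullet$ over $\cN$. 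Realizing in the $\cV$-direction first gives, levelwise over $\cU_\bullet$, the realizations $|\cV_\bullet^{(i)}| \xrightarrow{\sim} \cU_i$ (using that we may compute these fiberwise and that $\cU_i \fib \cN$ is a fibration to identify strict and homotopy fiber products), so the diagonal realization is weakly equivalent to $|\cU_\bullet| \to \cN$, which is a weak equivalence by hypothesis; realizing in the other order identifies the diagonal with $|\cV_\bullet| \to \cN$. Making ``realize first in one direction'' rigorous requires knowing that geometric realization in $P(\cC,\Gr)$ commutes with itself (diagonal of a bisimplicial object) and preserves levelwise weak equivalences of simplicial diagrams — both standard for simplicial objects in a suitable model category, and for $P(\cC,\Gr)_L$ they follow since realization is defined objectwise $|F_\bullet|(X)=|F_\bullet(X)|$ (see \cite[Section 2.2]{H}) and the corresponding facts hold in $\Gr$ (equivalently in $\sSet$). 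I would spell out the bisimplicial bookkeeping only as far as needed to invoke these two facts, and otherwise cite them. A small additional point worth checking along the way is that a singleton fibration family with $|\cU_\bullet| \to \cN$ a weak equivalence is exactly the condition that $\cU \fib \cN$ is a ``cover'' in the sense compatible with Definition \ref{extensionofproperties}, so that this topology restricts correctly to the representable objects; this is immediate from the local lifting conditions but is the sanity check that the definition is the intended one.
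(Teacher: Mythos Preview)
Your overall architecture matches the paper's: right properness of $P(\cC,\Gr)_L$ for base change, and a bisimplicial argument for transitivity. The base-change step can be streamlined---there is no need to replace $\cN'\to\cN$ by a fibration, because $\coprod\cU_i\to\cN$ is already an objectwise fibration, hence so is $|\cU_\bullet|\to\cN$; since geometric realization commutes with pullbacks, $|\cU_\bullet\times_\cN\cN'|\to\cN'$ is the pullback of an objectwise fibration and weak equivalence, hence a weak equivalence by \cite[Corollary~5.8]{H}. But this is cosmetic.

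The genuine gap is in your transitivity argument. After setting up the bisimplicial object $\cV_{\bullet,\bullet}$ and realizing the columns to get $|\diag\cV_{\bullet,\bullet}|\simeq|\cU_\bullet|\simeq\cN$, you assert that ``realizing in the other order identifies the diagonal with $|\cV_\bullet|\to\cN$.'' This is not correct. For fixed $q$, the row $\cV_{\bullet,q}$ is the \v{C}ech nerve of $\cV^{\times_\cU(q+1)}\to\cN$, and you have no control over its realization---indeed, for $q=0$ this \emph{is} the object whose realization you are trying to identify, so the argument is circular. The \v{C}ech nerve of $\{\cV_{ij}\to\cN\}$ is the $0$th row $\cV_{\bullet,0}$, not the diagonal, and the two are not obtained from each other by realizing in some order. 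The paper bridges this by a different device: since $\cV_{\bullet,0}$ is a $0$-coskeleton over $\cN$, the canonical map $\cV_{\bullet,0}\to\diag\cV_{\bullet,\bullet}$ admits a retraction over $\cN$ (see \cite[Proposition~A4]{DHI}); hence $|\cV_{\bullet,0}|\to\cN$ is a retract of the weak equivalence $|\diag\cV_{\bullet,\bullet}|\to\cN$, and therefore a weak equivalence itself. You need this retraction (or an equivalent argument) to finish.
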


\begin{proof} 
First we prove that pullbacks of covers are covers.
Let $\coprod \cU_i \ra \cN$ be a cover and $\m \ra \cN$ a morphism.
The morphism $\coprod \cU_i \ra \cN$ is an objectwise fibration and so 
the induced map $|\cU_\bullet| \ra \cN$ is also an objectwise fibration. 
As geometric realization commutes with fiber products, 
$|\cU_\bullet| \times_{\cN} \m \cong |\cU_\bullet \times_{\cN} \m|$ 
and so we have a pullback square
$$\xymatrix{|\cU_\bullet \times_{\cN} \m| \ar[r] \ar[d] & 
|\cU_\bullet| \ar[d] \\ \m \ar[r] & \cN}$$
where the right vertical map is an objectwise fibration and a weak equivalence.
By \cite[Corollary 5.8]{H} the pullback of a weak equivalence 
which is an objectwise fibration is a weak equivalence.

To see that covers compose, let $\{\cV_{ij} \ra \cU_i\}$ be covers 
of each $\cU_i$. 
The iterated fiber products of the covers $\{\cV_{ij} \ra \cU_i\}$
form a bisimplicial object $\cV_{\bullet,\bullet}$ augmented over $\cU_\bullet$. 
The columns $\cV_{n,\bullet}$ are iterated fiber products of the nerves
of $\{\cV_{ij} \ra \cU_i\}$ and therefore the map induced by the
augmentation
\[ |\cV_{\bullet,\bullet}| \to |\cU_\bullet| \to \cN \]
is a weak equivalence.
The geometric realization of the bisimplicial object is equivalent 
to the geometric realization of its diagonal, so $|\diag \cV_{\bullet,\bullet}| \ra \cN$
is a weak equivalence. 

The nerve of the cover $\{\cV_{ij} \to \cN\}$ is the 0-th row $\cV_{\bullet,0}$
Since $\cV_{\bullet,0}$ is a 0-coskeleton over $\cN$, there 
is a retraction to the canonical map $\cV_{\bullet,0} \to \diag\cV_{\bullet,\bullet}$ over $\cN$
(see \cite[Proposition A4]{DHI}) and therefore $ \cV_{\bullet,0} \to \cN $
is a weak equivalence.
\end{proof}

\begin{prop} \label{charac-covers}
Given a collection $\{ f_i \colon \cU_i \fib \cN\}$ in $P(\cC,\Gr)/\m$  
the following are equivalent:
\begin{enumerate}[(i)]
\item The collection $\{ f_i \colon \cU_i \fib \cN\}$ is a cover in $P(\cC,\Gr)/\m$.
\item For each $X \ra \cN$ the collection $\{\cU_i\times_{\cN} X \fib X \}$ is 
	a cover in $P(\cC,\Gr)/\m$.
\item  For each $X \ra \cN$ the union of the images of $\cU_i\times_{\cN} X \fib X $ 
	is a covering sieve of $X$ in $\cC$. 
\end{enumerate}
\end{prop}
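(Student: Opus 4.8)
The plan is to prove the cycle $(i)\Rightarrow(ii)\Rightarrow(iii)\Rightarrow(i)$, reducing everything to the base site $\cC$ by exploiting that geometric realization commutes with pullback and that weak equivalences in $P(\cC,\Gr)_L$ are detected by the local lifting conditions of Definition \ref{locallifting}.

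First I would handle $(i)\Rightarrow(ii)$: this is essentially already contained in the proof of Theorem \ref{new-GT}, where we showed pullbacks of covers are covers. Given $X\to\cN$, the $\cU_i\times_\cN X\to X$ are still fibrations (pullbacks of fibrations), and since $|\cU_\bullet|\times_\cN X\cong|\cU_\bullet\times_\cN X|$ and the pullback of the objectwise-fibration weak equivalence $|\cU_\bullet|\to\cN$ along $X\to\cN$ is again a weak equivalence by \cite[Corollary 5.8]{H}, we get $(ii)$. Next, for $(iii)\Rightarrow(i)$, suppose for every $X\to\cN$ the union of the images of the $\cU_i\times_\cN X\to X$ is a covering sieve. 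I would verify the local lifting conditions for $|\cU_\bullet|\to\cN$. Fibrancy and the discreteness of the relevant objects in $\cC$ make condition \ref{locallifting}(2) automatic (exactly as in the proof of Lemma \ref{ass-stack} and Remark \ref{incfullsubcat}); the content is in \ref{locallifting}(1): given $X\to\cN$, I need a cover $V\to X$ through which $X\to\cN$ lifts to $|\cU_\bullet|$. But by hypothesis the images of $\cU_i\times_\cN X\to X$ generate a covering sieve, so there is a covering family $\{V_k\to X\}$ with each $V_k\to X$ factoring through some $\cU_i\times_\cN X\to X$; composing $V_k\to \cU_i\times_\cN X\to\cU_i\to|\cU_\bullet|$ supplies the desired local lift. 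This shows $|\cU_\bullet|\to\cN$ is a weak equivalence, i.e. $(i)$.

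The remaining implication $(ii)\Rightarrow(iii)$ — or equivalently, going directly from $(i)$ to $(iii)$ — is the one I expect to be the main obstacle. Here one must extract, from the purely homotopy-theoretic statement that $|\cU_\bullet\times_\cN X|\to X$ is a weak equivalence in $P(\cC,\Gr)_L$, the combinatorial statement that the images of the $\cU_i\times_\cN X\to X$ form a covering sieve in $\cC$. The natural route: having reduced to the case $\cN=X\in\cC$ by $(ii)$, apply the local lifting condition \ref{locallifting}(1) to the identity object of $X$ (viewed as an object of the target groupoid-valued presheaf at $X$) to obtain a cover $V\to X$ together with a lift $V\to|\cU_\bullet|$. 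Since $V$ is in $\cC$ hence discrete and $|\cU_\bullet|$ is built from the $\cU_i$ by coequalizer/colimit along the simplicial structure, a map out of a discrete sheaf $V$ factors (after passing to a further cover of $V$, using that $\coprod_i\cU_i\to|\cU_\bullet|$ is locally surjective on $\pi_0$) through $\coprod_i\cU_i$, and that factorization lands in some $\cU_i$ on each piece of the cover. Tracking this factorization back down through $\cU_i\times_\cN X\to X$ exhibits the members of a cover of $X$ as factoring through the $\cU_i\times_\cN X$, which is precisely the covering-sieve condition. The care needed is in the "factors through $\coprod\cU_i$ after refinement" step, which uses that $|\cU_\bullet|$ is a $0$-coskeleton-type realization and that $\pi_0$ of a geometric realization of a simplicial diagram is the coequalizer of the first two face maps; I would cite \cite[Proposition A4]{DHI} and the explicit description of geometric realization in $P(\cC,\Gr)$ from \cite[Section 2.2]{H} to make this precise.
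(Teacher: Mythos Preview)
Your overall cycle $(i)\Rightarrow(ii)\Rightarrow(iii)\Rightarrow(i)$ is reasonable and $(i)\Rightarrow(ii)$ is fine, but there is a genuine gap in your $(iii)\Rightarrow(i)$ step. You assert that local lifting condition \ref{locallifting}(2) for $|\cU_\bullet|\to\cN$ is automatic ``exactly as in the proof of Lemma \ref{ass-stack} and Remark \ref{incfullsubcat}'', but those results concern a groupoid object $(X_0,X_1)$ \emph{in $\cC$}, where discreteness of $X_0,X_1$ is what makes the map levelwise equivalent to a full-subcategory inclusion. Here the $\cU_i$ are arbitrary presheaves of groupoids, so that argument does not transfer. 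What you would actually need is that for a levelwise fibration $\coprod_i\cU_i\to\cN$ the induced map $|\cU_\bullet|\to\cN$ is levelwise fully faithful; this is true but requires its own argument, and nothing in your sketch supplies it. The paper sidesteps the issue entirely by proving $(ii)\Rightarrow(i)$ instead: it shows that for every $X\to\cN$ \emph{and} every $X\otimes\Delta^1\to\cN$ the pullback of $|\cU_\bullet|\to\cN$ is a weak equivalence, and the $X\otimes\Delta^1$ case is precisely what handles the morphism-lifting half of \ref{locallifting}(2).

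Your $(ii)\Rightarrow(iii)$ also diverges from the paper, and the step you flag as the obstacle is indeed not adequately handled. You extract a cover $V\to X$ from \ref{locallifting}(1) and then try to factor $V\to|\cU_\bullet|$ through $\coprod_i\cU_i$ ``after refinement''; this is delicate because $|\cU_\bullet|$ is not discrete, and \cite[Proposition A4]{DHI} (a statement about retracting onto a $0$-coskeleton) is not the tool that produces such a factorization. The paper's route is cleaner and avoids this: for any sheaf $F$ one has $\Map(X,F)\we\holim\Map(\cU_\bullet,F)\cong\Map(\colim\pi_0\cU_\bullet,F)$, so $X$ is the sheaf-theoretic coequalizer of $\pi_0\cU_1\rightrightarrows\pi_0\cU_0$, whence \cite[Corollary III.7.7]{MM} yields the covering sieve directly. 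For the converse direction the paper shows $(iii)\Rightarrow(ii)$: the covering-sieve hypothesis makes $sh(\coprod\pi_0\cU_i)\to X$ surjective, hence $\colim\pi_0\cU_\bullet\to X$ is a weak equivalence, and $|\cU_\bullet|\to\colim\pi_0\cU_\bullet$ is a levelwise weak equivalence because $\cU_\bullet$ is a $0$-coskeleton over $X$.
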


\begin{proof} 
The fact that (i) implies (ii) is a part of the axioms for a topology.
First we prove that that (ii) implies (i).  Given
$X \in \cC,$ and $X \ra \cN$, let $W$ denote $|\cU_{\bullet}|$,
then the projection map $W\times_{\cN}X \ra X$
is a weak equivalence since
$$W \times_{\cN} X \cong |\cU_{\bullet}\times_{\cN} X| \cong 
|(\cU \times_{\cN} X)_{\bullet}|,$$ 
so this map is the induced map to $X$ from the nerve of the 
cover $\cU_i \times_{\cN} X \ra X$.

Similarly, given any map $X \otimes \Delta^1 \ra \cN$ the 
pullback $(X \otimes \Delta^1) \times_\cN W \ra X \otimes \Delta^1$
is a weak equivalence since in the diagram
\[ \xymatrix{ W \times_{\cN} X \ar[r] \ar[d]^\sim &
W \times_{\cN} (X \otimes \Delta^1) \ar[r] \ar[d] & W  \ar[d] \\ 
X \ar[r]^\sim & X \otimes \Delta^1 \ar[r] & \cN}  \]
the top left map is a levelwise weak equivalence (because $W \to \n$
is a levelwise fibration and $\Gr$ is right proper).
It is now straightforward to check that $W \ra \cN$ is a weak equivalence
using the local lifting conditions \ref{locallifting}.


To see that (ii) implies (iii) let $\{\cU_i \fib X \}$ be a cover in $P(\cC,\Gr)/\m$
and let $F$ be any sheaf on $\cC$. $F$ is a discrete stack and so  
$$Map(X,F) \we \holim Map(\cU_{\bullet}, F) \cong \lim Map(\cU_{\bullet}, F) 
\cong Map(\colim \pi_0\cU_{\bullet}, F)$$
which shows that $X$ is the coequalizer of the sheafification 
$(\coprod \pi_0\cU_{ij} \dbra \coprod \pi_0\cU_i)$, so by \cite[Corollary III.7.7]{MM}
the union of the images of $\pi_0\cU_i \ra X$ is a covering sieve in $\cC$.

Conversely suppose that $\{\cU_i \fib X \}$ generates a covering sieve.
This means that $sh(\coprod \pi_0\cU_i) \ra X$ is a surjection of sheaves, from 
which it follows that $\colim \pi_0 \cU_\bullet \ra X$ is a weak equivalence.
Since $\cU_\bullet$ is a $0$-coskeleton in simplicial objects over $X$
the projection $|\cU_\bullet| \ra \colim \pi_0 \cU_\bullet$ is a levelwise weak
equivalence.  It follows that $\{ \cU_i \fib X \}$ 
is a cover in $P(\cC,\Gr)/\m$.

\end{proof}

Here is the relation between the notion of cover on $P(\cC,\Gr)/\m$ just defined
with the ones defined previously.

\begin{cor} 
Let $\{ f_i \colon \cU_i \to \cN\}$ be a collection of representable morphisms
and $\{\tilde{f_i}: \tilde{\cU_i} \fib \cN \}$ be fibrations 
obtained by functorial factorization in $P(\cC,\Gr)_L$. 
\begin{enumerate}
\item If $\{f_i \colon \cU_i \to \cN \}$ is a cover in the sense of Definition 
	\ref{extensionofproperties}, then $\{\tilde{f_i}: \tilde{\cU_i} \fib \cN \}$ 
	is a cover in $P(\cC,\Gr)/\m$.
\item Conversely, if $\{\tilde{f_i}: \tilde{\cU_i} \fib \cN \}$ 
	is a cover in $P(\cC,\Gr)/\m$ then for each $X \ra \cN$ the collection
	$\{\cU_i\times^h_{\cN} X \ra X \}$ determine a covering sieve in $\cC$.
\end{enumerate}
\end{cor}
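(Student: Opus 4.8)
The plan is to deduce both statements by combining the characterization of covers in $P(\cC,\Gr)/\m$ given in Proposition \ref{charac-covers} with the homotopy invariance of the homotopy fiber product and the earlier remark identifying covers in the sense of Definition \ref{extensionofproperties} with covers in $Rep/\m$. For part (1), I would start with a collection $\{f_i\colon \cU_i \to \cN\}$ which is a cover in the sense of Definition \ref{extensionofproperties}. Since each $f_i$ is representable, the functorial factorization produces fibrations $\tilde{f_i}\colon \tilde{\cU_i}\fib\cN$ with $\cU_i \we \tilde{\cU_i}$ over $\cN$; in particular $\tilde{f_i}$ is weakly equivalent to $f_i$, so the note after the definition of representable morphism shows each $\tilde{f_i}$ is again representable, hence (being a fibration) strongly representable by Proposition \ref{representable}. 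Now fix $X \to \cN$ with $X \in \cC$. Because $\tilde{\cU_i}\fib\cN$ is a fibration, $\tilde{\cU_i}\times_\cN X$ computes the homotopy fiber product $\cU_i \times^h_\cN X$, which by hypothesis is weakly equivalent to a cover in $\cC$; passing to $\pi_0$ of its (representable) sheafification, the union of the images of $\tilde{\cU_i}\times_\cN X \fib X$ is a covering sieve of $X$ in $\cC$. By the implication (iii) $\Rightarrow$ (i) of Proposition \ref{charac-covers} (applied after replacing $\tilde{\cU_i}\times_\cN X \to X$ by a fibration, or directly since these are already fibrations over $X$), it follows that $\{\tilde{f_i}\colon \tilde{\cU_i}\fib\cN\}$ is a cover in $P(\cC,\Gr)/\m$.

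For part (2), I would run the same bridge in the opposite direction. Assume $\{\tilde{f_i}\colon \tilde{\cU_i}\fib\cN\}$ is a cover in $P(\cC,\Gr)/\m$. Fix $X \to \cN$ with $X \in \cC$. By the axioms for a topology, the pulled-back family $\{\tilde{\cU_i}\times_\cN X \fib X\}$ is again a cover in $P(\cC,\Gr)/\m$, and by the implication (i) (or (ii)) $\Rightarrow$ (iii) of Proposition \ref{charac-covers}, the union of the images of $\tilde{\cU_i}\times_\cN X \fib X$ is a covering sieve of $X$ in $\cC$. Since $\tilde{f_i}$ is a fibration, $\tilde{\cU_i}\times_\cN X$ is levelwise weakly equivalent to $\cU_i \times^h_\cN X$, and the image sieve depends only on $\pi_0$ of the sheafification, hence is unchanged. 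Therefore $\{\cU_i \times^h_\cN X \to X\}$ determines a covering sieve in $\cC$, which is exactly the assertion.

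The one point that requires a little care — and the main (mild) obstacle — is the direction of part (2): a cover in $P(\cC,\Gr)/\m$ only guarantees that $|\tilde{\cU}_\bullet| \to \cN$ is a weak equivalence, which after pulling back to $X$ and applying Proposition \ref{charac-covers} yields a \emph{covering sieve}, not necessarily that the original family $\{\cU_i \times^h_\cN X \to X\}$ is itself a cover in $\cC$ on the nose; this is why the statement of (2) is phrased in terms of the generated sieve rather than in terms of the family being a cover. One should also note that in (1), if the topology on $\cC$ is not saturated, the family $\{\tilde{\cU_i}\times_\cN X \fib X\}$ need not literally be a cover in $\cC$, but Proposition \ref{charac-covers} only needs the weaker sieve condition (iii), so the argument goes through regardless. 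No genuine difficulty arises beyond bookkeeping: every ingredient — homotopy invariance of $\times^h$, the fibration-replacement compatibility, and the three-way equivalence of Proposition \ref{charac-covers} — is already in hand.
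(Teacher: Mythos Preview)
Your proposal is correct and matches the paper's approach: the corollary is stated without proof immediately after Proposition \ref{charac-covers}, and the intended argument is exactly the one you give---pull back along $X\to\cN$, use that the fibration replacement computes the homotopy fiber product, and then invoke the equivalence (i)$\Leftrightarrow$(ii)$\Leftrightarrow$(iii) of Proposition \ref{charac-covers} in the appropriate direction. Your closing remarks about why (2) only yields a covering sieve rather than an honest cover are also on point and explain the asymmetry in the statement.
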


Let $Sh(\m)$ be the category of sheaves on $P(\cC,\Gr)_L/\m$ which take 
weak equivalences to isomorphisms.  
The above Corollary implies that we have a surjective restriction functor $Sh(\m) \ra Sh(\cC/\m)$.

\end{document}